\newtheorem{theorem}{Theorem}
\newtheorem{lemma}[theorem]{Lemma}
\newtheorem{proposition}[theorem]{Proposition}
\newtheorem{corollary}[theorem]{Corollary}
\theoremstyle{definition}
\newcommand{\Q}{{\mathbb Q}}
\newcommand{\Z}{{\mathbb Z}}
\newcommand{\eb}{\boldsymbol{e}}
\newcommand{\jb}{\boldsymbol{j}}
\newcommand{\lb}{\boldsymbol{l}}
\newcommand{\mb}{\boldsymbol{m}}
\newcommand{\ob}{\boldsymbol{o}}
\newcommand{\ot}{\otimes}
\newcommand{\pair}[1]{\langle{#1}\rangle}
\newcommand{\U}{\mathbf{U}}
\newcommand{\vep}{\varepsilon}
\newcommand{\wb}{\boldsymbol{w}}
\begin{document}

\title[]{Solution to the reflection equation related \\ to the $\iota$quantum group of type AII}
%[\today]

\author{HIROTO KUSANO and MASATO OKADO}

\address{Department of Mathematics, Osaka City University, Osaka 558-8585, Japan}

\begin{abstract}
A solution to the reflection equation associated to a coideal subalgebra of 
$U_q(A_{2n-1}^{(1)})$ of type AII in the symmetric tensor representations is presented. 
If parameters of the coideal subalgebra are suitably chosen, 
the $K$ matrix does not depend on the quantum parameter $q$ and still agrees with
a solution in \cite{KOYa} at $q=0$.
\end{abstract}

\maketitle

\section{Introduction}

Reflection equation assures the integrability in one-dimensional quantum systems or 
two-dimensional statistical models with boundaries. In the context of quantum 
integrability, it is an equation involving two kinds of linear operators, called quantum
$R$ and $K$ matrices, on the twofold tensor product of vector spaces. 
The mathematical framework to construct its solution lies in considering a pair of
a quantum group and its coideal subalgebra. They are called a quantum symmetric 
pair \cite{L} or an $\iota$quantum group \cite{BW1} and known to be classified by
Satake diagrams \cite{L,Ko}. In such a situation, $R$ and $K$ matrices 
contain the quantum parameter $q$. Moreover, if the representations
have crystal bases in the sense of Kashiwara \cite{K}, one can take the limit where
$q$ goes to 0, and we obtain bijections between sets that still satisfy a combinatorial 
version of the reflection equation.

In \cite{KOYa}, from the motivation of constructing a so-called box-ball system
with boundary, we found three solutions of the combinatorial $K$ matrix where
the combinatorial $R$ matrix in the reflection equation comes from the crystal basis 
of the symmetric tensor representation of the quantum affine algebra of type $A$.
See (2.10)-(2.12) of \cite{KOYa}. They were called ``Rotateleft'', ``Switch$_{12}$''
and ``Switch$_{1n}$''.
However, their quantum versions, namely, solutions of quantum $K$ matrices, were 
not found for a long time. Only recently, in \cite{KOYo} the solution corresponding
to ``Rotateleft'' were found.
The purpose of this note is to find the origin of the other two solutions
``Switch$_{12}$'' and ``Switch$_{1n}$'' from the list of $\iota$quantum groups.
The correct one was found to be the affine version
of type AII. See e.g.~\cite{L,Ko,W}. Rather surprisingly, if we choose parameters in
our $\iota$quantum group suitably, the $K$ matrices does not depend on $q$,
although the $R$ matrices do.

There are many $\iota$quantum groups other than affine type AII which we
dealt with in this note, and there also exists a notion of the universal $K$matrix
\cite{Ko,BW1,BW2} as with the universal $R$ matrix of a quantum group. 
We hope to report more solutions of the reflection equation that become combinatorial
upon taking the limit $q\to0$ in near future.

\section{$U_q(A^{(1)}_{2n-1})$ and relevant $R$ matrices} \label{sec:Uq}

\subsection{\mathversion{bold}$U_q(A^{(1)}_{2n-1})$ and relevant representations}
Let $\U = U_q(A^{(1)}_{2n-1})$ be the Drinfeld-Jimbo quantum affine algebra
(without the derivation operator). In this note, we assume $n\ge2$. $\U$ is 
generated by $e_i, f_i, k^{\pm 1}_i\, (i \in \Z_{2n})$ obeying the relations
\begin{equation}\label{hn}
\begin{split}
&k_i k^{-1}_i = k^{-1}_i k_i = 1,\;\; [k_i, k_j]=0,\;\;
k_ie_jk^{-1}_i = q^{a_{ij}}e_j,\;\;
k_if_jk^{-1}_i = q^{-a_{ij}}f_j,\;\;
[e_i, f_j]=\delta_{ij}\frac{k_i-k^{-1}_i}{q-q^{-1}},\\
&\sum_{\nu=0}^{1-a_{ij}}(-1)^\nu
e^{(1-a_{ij}-\nu)}_i e_j e_i^{(\nu)}=0,
\quad
\sum_{\nu=0}^{1-a_{ij}}(-1)^\nu
f^{(1-a_{ij}-\nu)}_i f_j f_i^{(\nu)}=0\;\;(i\neq j),
\end{split}
\end{equation}
where $e^{(\nu)}_i = e^\nu_i/[\nu]!, \,
f^{(\nu)}_i = f^\nu_i/[\nu]!$
and 
$[m]! = \prod_{j=1}^m [j]$.
The Cartan matrix $(a_{ij})_{i,j \in \Z_{2n}}$ is given by 
$a_{ij}= 2\delta_{i,j}-\delta_{i,j+1}-\delta_{i,j-1}$. 
It is well known that $\U$ is a Hopf algebra. 
We employ the coproduct $\Delta$ of the form 
\begin{equation}
\Delta (k^{\pm 1}_i) = k^{\pm 1}_i\otimes k^{\pm 1}_i,\quad
\Delta (e_i) = e_i\otimes 1 + k_i \otimes e_i,\quad
\Delta (f_i)  = f_i\otimes k^{-1}_i + 1 \otimes f_i.
\end{equation}

We will be concerned with the two irreducible representations 
of $\U$ labeled with a positive integer $l$:
\begin{align}
\pi_{l,x}: & \;U_q \rightarrow \mathrm{End}(V_{l,x}),
\quad V_{l,x}= \bigoplus_{\alpha \in B_l} \Q(q)v_\alpha,
\label{obt1}\\
\pi^*_{l,x}: & \; U_q \rightarrow \mathrm{End}(V^*_{l,x}),
\quad V^*_{l,x} = \bigoplus_{\alpha \in B_l} \Q(q)v^*_\alpha,
\label{obt2}
\end{align}
where $x$ is a spectral parameter in $\Q(q)$ and
\begin{align}
B_l = \{\alpha=(\alpha_1, \ldots, \alpha_{2n}) \in \Z_{\ge0}^{2n} \mid |\alpha|= l\}.
\end{align}
Here $|\alpha|=\sum_{i=1}^{2n}\alpha_i$.
The actions of the generators of $\U$ on these representations are given by
\begin{alignat}{2}
e_iv_\alpha &= x^{\delta_{i,0}}  [\alpha_{i+1}] 
v_{\alpha+\eb_i-\eb_{i+1}},
& \qquad\quad
e_iv^*_\alpha &= x^{\delta_{i,0}}
[\alpha_i]v^*_{\alpha-\eb_i+\eb_{i+1}},
\label{otm1}\\
f_iv_\alpha &= x^{-\delta_{i,0}} [\alpha_{i}] 
v_{\alpha-\eb_i+\eb_{i+1}},
&
f_iv^*_\alpha &= x^{-\delta_{i,0}} [\alpha_{i+1}]
v^*_{\alpha+\eb_i-\eb_{i+1}},
\label{otm2}\\
k_iv_\alpha &= q^{\alpha_i-\alpha_{i+1}}v_\alpha, 
&
k_iv^*_\alpha &= q^{-\alpha_i+\alpha_{i+1}}v^*_\alpha.
\label{otm3}
\end{alignat}
Here $\eb_i$ is the $i$-th standard basis vector and 
the index $j$ of the Chevalley generators or $\alpha$ should be understood as elements
of $\Z_{2n}$. $V_{l,x}$ is the $l$-th symmetric tensor representation of $\U$. 
$V^*_{l,x}$ is constructed on the dual space of $V_{l,x}$ by using the 
anti-automorphism $*$ of $\U$ defined on the generators as
\[
e_i^*=e_i,\quad f_i^*=f_i,\quad k_i^*=k_i^{-1},
\]
and by defining actions on $V^*_{l,x}$ as $\pair{uv^*,v}=\pair{v^*,u^*v}$ for 
$u\in\U,v\in V_{l,x},v^*\in V^*_{l,x}$. Our basis $\{v^*_\alpha\}$ of $V^*_{l,x}$ is
changed from the dual basis of $\{v_\alpha\}$ by multiplying $\prod_j[\alpha_j]!^{-1}$
on each dual basis vector, so it turns out that 
when $x=1$ both $\{v_\alpha\}$ and $\{v^*_\alpha\}$ are upper crystal bases \cite{K}.
At $q=0$, the former gives the crystal $B_l$ and 
the latter its dual $B^\vee_l$ in \cite{KOYa}.

\subsection{$R$ matrices}

We consider the following three $R$ matrices $R,R^*,R^{**}$ that are defined as 
intertwiners between the tensor product representations below.
\begin{alignat}{2}
R(x/y):& \; V_{l,x} \otimes V_{m,y} 
\rightarrow V_{m,y} \otimes V_{l,x},
&\quad
(\pi_{m,y} \otimes \pi_{l,x})\Delta(u)R(x/y)
&= R(x/y)(\pi_{l,x} \otimes \pi_{m,y})\Delta(u),
\label{LL1}\\
R^*(x/y):& \; V^*_{l,x} \otimes V_{m,y} 
\rightarrow V_{m,y} \otimes V^*_{l,x},
&\quad
(\pi_{m,y} \otimes \pi^*_{l,x})\Delta(u)R^*(x/y)
&= R^*(x/y)(\pi^*_{l,x} \otimes \pi_{m,y})\Delta(u),
\label{LL2}\\
R^{* *}(x/y):
& \; V^*_{l,x} \otimes V^*_{m,y} 
\rightarrow V^*_{m,y} \otimes V^*_{l,x},
&\quad 
(\pi^*_{m,y} \otimes \pi^*_{l,x})\Delta(u)R^{**}(x/y)
&= R^{**}(x/y)(\pi^*_{l,x} \otimes \pi^*_{m,y})\Delta(u),
\label{LL3}
\end{alignat}
where $u\in\U$. They satisfy the Yang-Baxter equations:
\begin{align}
(1\otimes R(x))(R(xy) \otimes 1)(1\otimes R(y))
&=
(R(y)\otimes 1)(1\otimes R(xy))(R(x) \otimes 1),
\label{rb4}\\
(1\otimes R^*(x))(R^*(xy) \otimes 1)(1\otimes R(y))
&=
(R(y)\otimes 1)(1\otimes R^*(xy))(R^*(x) \otimes 1),
\\
(1\otimes R^{* *}(x))(R^*(xy) \otimes 1)(1\otimes R^*(y))
&=
(R^*(y)\otimes 1)(1\otimes R^*(xy))(R^{**}(x) \otimes 1),
\\
(1\otimes R^{* *}(x))(R^{* *}(xy) \otimes 1)(1\otimes R^{* *}(y))
&=
(R^{* *}(y)\otimes 1)(1\otimes R^{* *}(xy))(R^{**}(x) \otimes 1).
\label{rb5}
\end{align}

\section{Reflection equation and its solution}

\subsection{Coideal subalgebra}

We consider two coideal subalgebras $\U^\iota_\vep$ ($\vep=0,1$) of $\U$.
Set $I=\{0,1,\ldots,2n-1\}$. An element of $I$ is considered to correspond to 
a vertex of the Dynkin diagram of $A^{(1)}_{2n-1}$. In view of this, we identify $I$
with $\Z_{2n}$. For each $\vep=0,1$, set
\[
I_\circ=\{\vep,2+\vep,\ldots,2n-2+\vep\},\quad I_\bullet=I\setminus I_\circ.
\]
We define two subalgebras ${\bf U}^\iota_\vep$ of $\U$ for $\vep=0,1$.
Each one is generated by
$e_i,f_i,k_i\,(i\in I_\bullet),b_i\,(i\in I_\circ)$ where
\begin{align*}
b_i&=f_i+\gamma_i T_{w_\bullet}(e_i)k_i^{-1},\\
T_{w_\bullet}(e_i)&=e_{i+1}e_{i-1}e_i-q^{-1}(e_{i+1}e_ie_{i-1}+e_{i-1}e_ie_{i+1})+q^{-2}e_ie_{i-1}e_{i+1}.
\end{align*}
Here $\gamma_i$ is a constant. Then, we have 

\begin{proposition} \label{prop:eb=be}
For $i\in I_\circ$, $e_{i\pm1}b_i=b_ie_{i\pm1}$.
\end{proposition}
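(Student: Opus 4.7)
The plan is to reduce the commutation $e_{i\pm 1} b_i = b_i e_{i\pm 1}$ in two steps to a purely positive-part identity, and then verify that identity directly using the quantum Serre relations.

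For $i\in I_\circ$, both neighbors $i\pm 1$ lie in $I_\bullet$, so in particular $i\pm 1\neq i$. The defining relation $[e_j,f_i]=\delta_{ij}(k_i-k_i^{-1})/(q-q^{-1})$ then gives $[e_{i\pm 1},f_i]=0$. Since $b_i=f_i+\gamma_i T_{w_\bullet}(e_i)k_i^{-1}$, the $f_i$ summand already commutes with $e_{i\pm 1}$, and the claim reduces to showing that $e_{i\pm 1}$ commutes with $T_{w_\bullet}(e_i)k_i^{-1}$. Using $a_{i,i\pm 1}=-1$, the Cartan relation yields $k_i^{-1}e_{i\pm 1}=q\,e_{i\pm 1}k_i^{-1}$; after cancelling the common right factor $k_i^{-1}$, the problem becomes the $q$-commutator identity
\[
e_{i\pm 1}\,T_{w_\bullet}(e_i)=q\,T_{w_\bullet}(e_i)\,e_{i\pm 1}.
\]

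I would prove this identity directly. Setting $a=e_{i-1}$, $b=e_i$, $c=e_{i+1}$, two ingredients are available: (i) since $n\ge 2$, the indices $i-1$ and $i+1$ are non-adjacent in the Dynkin diagram, so $a_{i-1,i+1}=0$ and hence $ac=ca$; (ii) the quantum Serre relations $x^2y-(q+q^{-1})xyx+yx^2=0$ for the pairs $(a,b)$ and $(b,c)$. Using (i) to move $a$'s past $c$'s freely, one may rewrite $T_{w_\bullet}(b)=acb-q^{-1}(abc+cba)+q^{-2}bca$. For the $c$ case, expand $c\cdot T_{w_\bullet}(b)$, use (i) to simplify $cabc$ to $acbc$ and $cbca$ to $cbac$, and apply the Serre identity $c^2 b=(q+q^{-1})cbc-bc^2$ (together with $c^2 a=ac^2$) to eliminate each $c^2$ factor. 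The resulting expression matches $q\,T_{w_\bullet}(b)\cdot c$ term-by-term. The case of $e_{i-1}=a$ is completely parallel: either repeat the argument with the Serre relation for $(a,b)$, or invoke the $a\leftrightarrow c$ symmetry of $T_{w_\bullet}(b)$ that emerges after using $ac=ca$.

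The main obstacle is purely combinatorial bookkeeping in the final step. Each side of the $q$-commutator identity expands into four monomials in $a,b,c$, and Serre reduction at each occurrence of $c^2$ further splits these into sums; one must track the signs and powers of $q^{\pm 1}$ carefully so that the terms of type $acbc$, $cbac$, $abc^2$, $bac^2$ on the two sides cancel exactly. Nothing conceptual is at stake beyond (i) and (ii), but the cancellation depends delicately on the precise coefficients $1,-q^{-1},-q^{-1},q^{-2}$ in the definition of $T_{w_\bullet}(e_i)$.
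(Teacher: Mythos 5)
The paper states Proposition~\ref{prop:eb=be} without proof, so there is no argument of the authors' to compare against; your direct verification is exactly the computation being left to the reader, and it is correct. Your two reductions are sound: $[e_{i\pm1},f_i]=0$ since $i\pm1\neq i$, and $k_i^{-1}e_{i\pm1}=q\,e_{i\pm1}k_i^{-1}$ turns the claim into $e_{i\pm1}T_{w_\bullet}(e_i)=q\,T_{w_\bullet}(e_i)e_{i\pm1}$. I checked the final identity with $a=e_{i-1}$, $b=e_i$, $c=e_{i+1}$: using $ac=ca$ (valid since $n\ge2$ makes $i-1$ and $i+1$ non-adjacent on the cycle) and $c^2b=(q+q^{-1})cbc-bc^2$, one gets $cT_{w_\bullet}(b)=q\,acbc-abc^2-cbac+q^{-1}bac^2=q\,T_{w_\bullet}(b)c$, and the $a$ case is symmetric, so the bookkeeping you flagged does close up.
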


\begin{table}[h]
\setlength{\unitlength}{0.15in}
\hspace{-4cm}
\begin{minipage}{7cm}
\begin{picture}(24,4)
\put(7.95,1){\makebox(0,0)[c]{$\bullet$}}
\put(10,1){\makebox(0,0)[c]{$\circ$}}
\put(12.1,1){\makebox(0,0)[c]{$\bullet$}}
\put(16.75,1){\makebox(0,0)[c]{$\cdots$}}
\put(17.5,1){\makebox(0,0)[c]{$\cdots$}}
\put(18.25,1){\makebox(0,0)[c]{$\cdots$}}
\qbezier(8.1,1.25)(10.4,3.4)(14.2,3.5)
\qbezier(14.8,3.5)(18.45,3.4)(20.45,1.25)
\put(8.2,1){\line(1,0){1.5}}
\put(10.3,1){\line(1,0){1.5}}
\put(12.45,1){\line(1,0){1.5}}
\put(14.6,1){\line(1,0){1.4}}
\put(18.9,1){\line(1,0){1.4}}
\put(14.3,1){\makebox(0,0)[c]{$\circ$}}
\put(14.5,3.5){\makebox(0,0)[c]{$\circ$}}
\put(20.6,1){\makebox(0,0)[c]{$\bullet$}}
\put(14.5,2.8){\makebox(0,0)[c]{\tiny $0$}}
\put(14.3,0.3){\makebox(0,0)[c]{\tiny $4$}}
\put(8,0.3){\makebox(0,0)[c]{\tiny $1$}}
\put(10,0.3){\makebox(0,0)[c]{\tiny $2$}}
\put(12.1,0.3){\makebox(0,0)[c]{\tiny $3$}}
\put(20.6,0.3){\makebox(0,0)[c]{\tiny $2n-1$}}
\end{picture}
\end{minipage}
\vspace{5mm}
\begin{minipage}{7cm}
\begin{picture}(24,4)
\put(7.95,1){\makebox(0,0)[c]{$\circ$}}
\put(10,1){\makebox(0,0)[c]{$\bullet$}}
\put(12.1,1){\makebox(0,0)[c]{$\circ$}}
\put(16.75,1){\makebox(0,0)[c]{$\cdots$}}
\put(17.5,1){\makebox(0,0)[c]{$\cdots$}}
\put(18.25,1){\makebox(0,0)[c]{$\cdots$}}
\qbezier(8.1,1.25)(10.4,3.4)(14.2,3.5)
\qbezier(14.8,3.5)(18.45,3.4)(20.45,1.25)
\put(8.2,1){\line(1,0){1.5}}
\put(10.3,1){\line(1,0){1.5}}
\put(12.45,1){\line(1,0){1.5}}
\put(14.6,1){\line(1,0){1.4}}
\put(18.9,1){\line(1,0){1.4}}
\put(14.3,1){\makebox(0,0)[c]{$\bullet$}}
\put(14.5,3.5){\makebox(0,0)[c]{$\bullet$}}
\put(20.6,1){\makebox(0,0)[c]{$\circ$}}
\put(14.5,2.8){\makebox(0,0)[c]{\tiny $0$}}
\put(14.3,0.3){\makebox(0,0)[c]{\tiny $4$}}
\put(8,0.3){\makebox(0,0)[c]{\tiny $1$}}
\put(10,0.3){\makebox(0,0)[c]{\tiny $2$}}
\put(12.1,0.3){\makebox(0,0)[c]{\tiny $3$}}
\put(20.6,0.3){\makebox(0,0)[c]{\tiny $2n-1$}}
\end{picture}
\end{minipage}
\caption{Satake diagrams of $\U^\iota_0$ and $\U^\iota_1$}
\end{table}
The following fact is well known. See \cite{L,Ko,W} for instance.

\begin{proposition}
${\bf U}^\iota_\vep$ is a right coideal subalgebra of $\U$. Namely, we have $\Delta(\U^\iota_\vep)\subset \U^\iota_\vep\ot\U$.
\end{proposition}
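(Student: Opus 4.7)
The plan is to reduce the claim to a generator check via multiplicativity of $\Delta$. For the generators $k_i^{\pm 1}, e_i, f_i$ with $i\in I_\bullet$, the coproduct formulas recalled in Section~\ref{sec:Uq} show directly that the left tensor factor of every summand lies in $\mathbf{U}^\iota_\vep$. The only nontrivial case is $\Delta(b_i)$ for $i\in I_\circ$.

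A structural fact specific to AII is that for $i\in I_\circ$ both $i-1$ and $i+1$ lie in $I_\bullet$, and these two vertices are non-adjacent in the Dynkin diagram of $A^{(1)}_{2n-1}$, so $[e_{i-1},e_{i+1}]=0$. Using this, I would expand $\Delta(b_i)=\Delta(f_i)+\gamma_i\,\Delta(T_{w_\bullet}(e_i))\,\Delta(k_i^{-1})$ by substituting $\Delta(e_j)=e_j\otimes 1+k_j\otimes e_j$ into each of the four cubic monomials making up $T_{w_\bullet}(e_i)$, and then sort the resulting terms by which subset of $\{e_{i-1},e_i,e_{i+1}\}$ is deposited on the left tensor factor. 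The terms in which all three $e$'s land on the left reassemble into $T_{w_\bullet}(e_i)\otimes 1$ (using $[e_{i-1},e_{i+1}]=0$ and the coefficients $(1,-q^{-1},-q^{-1},q^{-2})$ precisely as given); multiplied by $\gamma_i(k_i^{-1}\otimes k_i^{-1})$ and added to $f_i\otimes k_i^{-1}$ from $\Delta(f_i)$, this produces the leading term $b_i\otimes k_i^{-1}$. The term $1\otimes f_i$ from $\Delta(f_i)$ is trivially in $\mathbf{U}^\iota_\vep\otimes\mathbf{U}$, and every remaining summand carries only $e_{i\pm 1}$ together with some $k_j^{\pm 1}$ on the left, hence sits in $\mathbf{U}^\iota_\vep\otimes\mathbf{U}$.

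The main obstacle, and the only step that carries real content, is showing that the terms in which $e_i$ appears \emph{alone} on the left tensor factor cancel — if they did not, the left factor would contain $e_i\notin\mathbf{U}^\iota_\vep$ and the whole argument would collapse. Using $k_{i\pm 1}e_i=q^{-1}e_ik_{i\pm 1}$ to push the $k$'s past $e_i$ into the common normal form $e_ik_{i-1}k_{i+1}\otimes e_{i-1}e_{i+1}$, the four monomials of $T_{w_\bullet}(e_i)$ contribute coefficients $q^{-2},-q^{-2},-q^{-2},q^{-2}$, which sum to zero. This cancellation is precisely the constraint that forces the $(1,-q^{-1},-q^{-1},q^{-2})$ weights in the definition of $T_{w_\bullet}(e_i)$, and it identifies the proposition as a direct instance of the Letzter--Kolb construction of coideal subalgebras in quantum symmetric pairs \cite{L,Ko,W}.
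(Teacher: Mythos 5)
The paper itself gives no proof of this proposition (it simply cites \cite{L,Ko,W}), so your plan of a direct generator-by-generator verification is a legitimate route, and the parts you do carry out are correct: the generators indexed by $I_\bullet$ are immediate, the all-left terms of $\Delta(T_{w_\bullet}(e_i))$ reassemble with $f_i\ot k_i^{-1}$ into $b_i\ot k_i^{-1}$, and your cancellation of the terms with $e_i$ alone on the left (normal form $e_ik_{i-1}k_{i+1}\ot e_{i-1}e_{i+1}$, contributions $q^{-2},-q^{-2},-q^{-2},q^{-2}$) checks out.

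However, there is a genuine gap in the case analysis. When you sort the expansion of the four cubic monomials by which subset of $\{e_{i-1},e_i,e_{i+1}\}$ lands on the left, the subsets $\{e_{i-1},e_i\}$ and $\{e_i,e_{i+1}\}$ also occur: they give summands whose left factor contains $e_i$ together with exactly one of $e_{i\pm1}$, for instance $e_{i+1}k_{i-1}e_i\ot e_{i-1}$ coming from the monomial $e_{i+1}e_{i-1}e_i$. Your assertion that ``every remaining summand carries only $e_{i\pm1}$ together with some $k_j^{\pm1}$ on the left'' is therefore false as stated, and since $e_i\notin\U^\iota_\vep$ for $i\in I_\circ$, these terms would destroy the coideal property unless they too cancel. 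They do, by a computation parallel to the one you did for the subset $\{e_i\}$ but slightly more delicate because $e_i$ and $e_{i\pm1}$ do not commute: for the terms $(\cdot)\ot e_{i-1}$ the four left factors normalize, via $k_{i-1}e_i=q^{-1}e_ik_{i-1}$ and $k_{i-1}e_{i+1}=e_{i+1}k_{i-1}$, to $q^{-1}e_{i+1}e_ik_{i-1}$, $e_{i+1}e_ik_{i-1}$, $q^{-1}e_ie_{i+1}k_{i-1}$, $e_ie_{i+1}k_{i-1}$ with $T_{w_\bullet}$-coefficients $1,-q^{-1},-q^{-1},q^{-2}$, so the contributions to each of the two independent orderings $e_{i+1}e_i$ and $e_ie_{i+1}$ vanish separately; the subset $\{e_{i-1},e_i\}$ is handled symmetrically. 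Adding these two cancellations closes the gap. A smaller point you should also make explicit: in the remaining subsets ($\emptyset$, $\{e_{i-1}\}$, $\{e_{i+1}\}$, $\{e_{i-1},e_{i+1}\}$) the left factor carries $k_i$, which is \emph{not} a generator of $\U^\iota_\vep$; it is only after multiplying by $\Delta(k_i^{-1})=k_i^{-1}\ot k_i^{-1}$ that this $k_i$ cancels and the left factor lands in the subalgebra generated by $e_{i\pm1}$ and $k_{i\pm1}^{\pm1}$.
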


We also use the following result later.

\begin{lemma} \label{lem:bi}
For $i\in I_\circ$, the action of $b_i$ on $V_{l,x}$ or $V^*_{l,x}$ is given by
\begin{align*}
b_iv_{\alpha}&=x^{-\delta_{i,0}}[\alpha_i]v_{\alpha-\eb_i+\eb_{i+1}}
-x^{\delta_{i,0}+\delta_{i,1}+\delta_{i,-1}}q^{-1}\gamma_i[\alpha_{i+2}]
v_{\alpha+\eb_{i-1}-\eb_{i+2}},\\
b_iv^*_{\alpha}&=x^{\delta_{i,0}}[\alpha_{i+1}]v^*_{\alpha+\eb_i-\eb_{i+1}}
-x^{-\delta_{i,0}-\delta_{i,1}-\delta_{i,-1}}q^{-1}\gamma_i[\alpha_{i-1}]
v^*_{\alpha-\eb_{i-1}+\eb_{i+2}}.
\end{align*}
\end{lemma}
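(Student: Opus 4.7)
The plan is to substitute $b_i=f_i+\gamma_iT_{w_\bullet}(e_i)k_i^{-1}$ into $b_iv_\alpha$ and evaluate each piece via (\ref{otm1})--(\ref{otm3}). The first summand is read off immediately from (\ref{otm2}). For the second summand, I would apply $k_i^{-1}$ first to produce the scalar $q^{-\alpha_i+\alpha_{i+1}}$, then expand $T_{w_\bullet}(e_i)$ into its four monomial summands and evaluate each on $v_\alpha$ using (\ref{otm1}). Each of the four triples shifts the index by $\eb_{i-1}-\eb_{i+2}$ and contributes the common $x$-factor $x^{\delta_{i,0}+\delta_{i,1}+\delta_{i,-1}}$ (since at most one of $i-1,i,i+1$ equals $0$ modulo $2n$ when $n\ge 2$), together with a product of three $q$-integers in $\alpha_i,\alpha_{i+1},\alpha_{i+2}$.

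After factoring out the common $x$-power, the target vector $v_{\alpha+\eb_{i-1}-\eb_{i+2}}$, and the common $[\alpha_{i+2}]$, the scalar content of $T_{w_\bullet}(e_i)v_\alpha$ reduces to
\[
E=[b][a+1]-q^{-1}\bigl([a][b]+[b+1][a+1]\bigr)+q^{-2}[a][b+1],
\]
with $a=\alpha_i$ and $b=\alpha_{i+1}$. The decisive algebraic step is to observe the factorization
\[
E=\bigl([a+1]-q^{-1}[a]\bigr)\bigl([b]-q^{-1}[b+1]\bigr).
\]
Using the elementary identity $[n+1]=q[n]+q^{-n}$ one gets $[a+1]-q^{-1}[a]=q^a$ and $[b]-q^{-1}[b+1]=-q^{-b-1}$, whence $E=-q^{a-b-1}$. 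Multiplying by the $k_i^{-1}$-eigenvalue $q^{-a+b}$ cancels the dependence on $a,b$ and leaves $-q^{-1}$, which, together with the factors $\gamma_i$, $x^{\delta_{i,0}+\delta_{i,1}+\delta_{i,-1}}$, $[\alpha_{i+2}]$, and $v_{\alpha+\eb_{i-1}-\eb_{i+2}}$, produces exactly the second summand of the claimed formula.

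The statement for $V^*_{l,x}$ follows by the same procedure applied to the starred basis formulas in (\ref{otm1})--(\ref{otm3}). Each of the four triples in $T_{w_\bullet}(e_i)$ now shifts the starred index by $-\eb_{i-1}+\eb_{i+2}$, and the common extracted factor becomes $[\alpha_{i-1}]$; a parallel two-variable factorization collapses the four terms to a single $q$-power, which combines with the $k_i^{-1}$-eigenvalue $q^{\alpha_i-\alpha_{i+1}}$ on $v^*_\alpha$ to again produce $-q^{-1}$. The only real obstacle is the careful bookkeeping for the twelve individual generator applications and the subsequent $q$-integer arithmetic; once the factorization of $E$ is spotted, the rest is mechanical.
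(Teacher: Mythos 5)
The paper states Lemma~\ref{lem:bi} without proof, so your direct expansion of $b_i=f_i+\gamma_iT_{w_\bullet}(e_i)k_i^{-1}$ via \eqref{otm1}--\eqref{otm3} is surely the intended argument, and for $V_{l,x}$ your computation checks out completely: each of the four monomials of $T_{w_\bullet}(e_i)$ shifts $\alpha$ by $\eb_{i-1}-\eb_{i+2}$, contributes the common factors $[\alpha_{i+2}]$ and $x^{\delta_{i,0}+\delta_{i,1}+\delta_{i,-1}}$ (three consecutive indices mod $2n$ with $n\ge2$ contain at most one $0$), and the residual scalar is
\[
[b][a+1]-q^{-1}\bigl([a][b]+[b+1][a+1]\bigr)+q^{-2}[a][b+1]=\bigl([a+1]-q^{-1}[a]\bigr)\bigl([b]-q^{-1}[b+1]\bigr)=q^{a}\cdot\bigl(-q^{-b-1}\bigr),
\]
which against the $k_i^{-1}$-eigenvalue $q^{-a+b}$ collapses to $-q^{-1}$. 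The factorization is exactly the right key step, and the identity $[m+1]=q[m]+q^{-m}$ justifies both evaluations.

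There is, however, one point you should not gloss over in the dual half. Applying \eqref{otm1}--\eqref{otm3} \emph{literally} to $V^*_{l,x}$, the first summand of $b_iv^*_\alpha$ is $f_iv^*_\alpha=x^{-\delta_{i,0}}[\alpha_{i+1}]v^*_{\alpha+\eb_i-\eb_{i+1}}$, and each $e_j$ in $T_{w_\bullet}(e_i)$ contributes $x^{+\delta_{j,0}}$, so the second summand carries $x^{+\delta_{i,0}+\delta_{i,1}+\delta_{i,-1}}$. These exponents are the \emph{negatives} of the ones printed in the lemma, so your claim that the same procedure ``again produces'' the stated starred formula is not literally correct: what it produces is that formula with $x$ replaced by $x^{-1}$. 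The printed exponents are those of the action on $V^*_{l,x^{-1}}$, which is the module actually entering \eqref{intertwine}, and they are the ones used in \eqref{(4)}; so this is almost certainly a misprint (or deliberate relabelling) in the lemma rather than a flaw in your method, but a faithful write-up must either record the sign flip or prove the formula for $V^*_{l,x^{-1}}$. The $q$-content of your dual computation is fine: the parallel factorization gives $\bigl([a]-q^{-1}[a+1]\bigr)\bigl([b+1]-q^{-1}[b]\bigr)=-q^{b-a-1}$, the common extracted factor is indeed $[\alpha_{i-1}]$ with shift $-\eb_{i-1}+\eb_{i+2}$, and multiplication by $k_i^{-1}$'s eigenvalue $q^{a-b}$ on $v^*_\alpha$ again yields $-q^{-1}$.
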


\subsection{$K$ matrix and the reflection equation}

For each $\vep=0,1$,
consider a linear map $K(x):V_{l,x} \rightarrow V^*_{l,x^{-1}}$ 
satisfying
\begin{align} \label{intertwine}
 K(x)\pi_{l,x}(a) = \pi^*_{l, x^{-1}}(a) K(x) \quad 
\text{for any }a \in {\bf U}^\iota_\vep.
\end{align}
To describe the solution, we introduce a particular permutation $\sigma^{(\vep)}$ 
of entries of $\alpha$ for $\vep=0,1$.  $\sigma^{(\vep)}$ switches $\alpha_{i-1}$ and
$\alpha_{i}$ whenever $i\equiv\vep$ (mod 2).
For instance, when $n=3$ we have
\[
\sigma^{(0)}(\alpha)=(\alpha_2,\alpha_1,\alpha_4,\alpha_3,\alpha_6,\alpha_5),\quad
\sigma^{(1)}(\alpha)=(\alpha_6,\alpha_3,\alpha_2,\alpha_5,\alpha_4,\alpha_1).
\]

\begin{proposition} \label{prop:K}
For each $\vep=0,1$, the intertwining relation \eqref{intertwine} has a solution 
if and only if
\[
\prod_{j\in I_\circ}\gamma_j=(-q)^n,
\]
in which case the solution is unique up to scalar multiple and given by
\[
K(x)v_\alpha = x^{\vep(\alpha_1-\alpha_{2n})}
\prod_{j=\vep,2+\vep,\ldots,2n-2+\vep}
(-q^{-1}\gamma_j)^{-\sum_{i=1+\vep}^j\alpha_i}
v^*_{\sigma^{(\vep)}(\alpha)}.
\]
\end{proposition}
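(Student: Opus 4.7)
The plan is to reduce \eqref{intertwine} to conditions on each generator of $\U^\iota_\vep$ and to solve them in two stages: first by pinning down the combinatorial support of $K(x)v_\alpha$ to a single dual basis vector, then by deriving a system of multiplicative recursions that determine the remaining scalar factor $c_\alpha(x)$.

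I would begin by writing $K(x)v_\alpha=\sum_\beta c_{\alpha,\beta}(x)v^*_\beta$ and testing \eqref{intertwine} against the Cartan generators $k_i$ ($i\in I_\bullet$). These force $c_{\alpha,\beta}=0$ unless $\alpha_i+\beta_i=\alpha_{i+1}+\beta_{i+1}$ for every such~$i$. Applied to the extremal weight $\alpha=(l,0,\dots,0)$, together with $|\beta|=l$ and $\beta\in\Z^{2n}_{\ge0}$, these conditions admit only $\beta=\sigma^{(\vep)}(\alpha)$, so $Kv_{(l,0,\dots,0)}$ is forced to be a scalar multiple of $v^*_{\sigma^{(\vep)}(l,0,\dots,0)}$. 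I would then propagate this one-dimensional support to arbitrary $\alpha\in B_l$ by applying \eqref{intertwine} successively for $e_i,f_i$ ($i\in I_\bullet$) and $b_i$ ($i\in I_\circ$): using \eqref{otm1}--\eqref{otm3} and Lemma~\ref{lem:bi}, each such operator sends a single-$v^*_\beta$ vector to a scalar multiple of another single-$v^*_{\beta'}$ vector, where the step $\beta\to\beta'$ is the $\sigma^{(\vep)}$-image of the corresponding step on the $\alpha$-side. The moves $\eb_j-\eb_{j+1}$ ($j\in I$) and $\eb_{i-1}-\eb_{i+2}$ ($i\in I_\circ$) generated this way span the root lattice of $A^{(1)}_{2n-1}$ and act transitively on $B_l$, so $K(x)v_\alpha=c_\alpha(x)\,v^*_{\sigma^{(\vep)}(\alpha)}$ for every $\alpha$.

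With this ansatz installed, each intertwining relation turns \eqref{intertwine} into a multiplicative recursion on $c_\alpha(x)$. Comparing coefficients on the two sides of the $b_i$ relation ($i\in I_\circ$) and using Lemma~\ref{lem:bi} yields the common multiplicative factor $-(q/\gamma_i)\,x^{-2\delta_{i,0}-\delta_{i,1}-\delta_{i,-1}}$ for both associated moves $\alpha\to\alpha+\eb_i-\eb_{i+1}$ and $\alpha\to\alpha+\eb_{i-1}-\eb_{i+2}$; the automatic agreement of these two factors is the first internal consistency check. The $I_\bullet$-generators give trivial factors on $\eb_j-\eb_{j+1}$ for $j\in I_\bullet\setminus\{0\}$, with an additional $x$-dependent contribution from the $e_0$-relation precisely when $\vep=1$ (so $0\in I_\bullet$).

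The main obstacle is the consistency of this entire system of recursions on closed loops in the move lattice. Around the canonical trivial loop $\sum_{j\in I}(\eb_j-\eb_{j+1})=0$ the product of multiplicative factors must equal $1$; collecting the $q,\gamma$-dependent contributions yields $(-q)^n\prod_{j\in I_\circ}\gamma_j^{-1}$, forcing $\prod_{j\in I_\circ}\gamma_j=(-q)^n$ as a necessary condition. Under that condition one verifies that all other loop relations are integer combinations (and $\sigma^{(\vep)}$-shifts) of the canonical one, so the recursions admit a common solution, unique up to a single global scalar. Finally, one checks by direct substitution that the closed-form expression in the statement—whose prefactor $x^{\vep(\alpha_1-\alpha_{2n})}$ and telescoping $\gamma$-product satisfy each recursion in turn—is this solution. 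The most delicate part, and hence the main computational obstacle, is the simultaneous bookkeeping of $x$-powers and $\gamma$-powers across the recursions, in particular those from $b_i$ at $i\in\{0,\pm1\}\cap I_\circ$ together with the $e_0$-recursion when $\vep=1$, whose combination is exactly what reproduces the $x^{\vep(\alpha_1-\alpha_{2n})}$ prefactor.
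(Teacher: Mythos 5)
Your second stage (the multiplicative recursions, the loop-consistency condition $\prod_{j\in I_\circ}\gamma_j=(-q)^n$, and the verification of the closed form) is essentially the paper's own argument and is fine. The genuine gap is in the first stage, where you claim that each generator of $\U^\iota_\vep$ ``sends a single-$v^*_\beta$ vector to a scalar multiple of another single-$v^*_{\beta'}$ vector.'' This is true for $e_i,f_i,k_i$ with $i\in I_\bullet$, but it is false for $b_i$ with $i\in I_\circ$: Lemma~\ref{lem:bi} --- which you cite --- shows that $b_iv^*_\beta$ (and likewise $b_iv_\alpha$) is generically a sum of \emph{two} basis vectors. Consequently the intertwining relation for $b_i$, applied at a weight $\alpha$ where $K(x)v_\alpha$ is already known to be a single term, couples the two unknowns $K(x)v_{\alpha-\eb_i+\eb_{i+1}}$ and $K(x)v_{\alpha+\eb_{i-1}-\eb_{i+2}}$ in one equation and determines neither. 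Nor can you separate them by weight: the two targets $\alpha-\eb_i+\eb_{i+1}$ and $\alpha+\eb_{i-1}-\eb_{i+2}$ have identical differences $\alpha_k-\alpha_{k+1}$ for every $k\in I_\bullet$, so your $k_i$-constraints allow exactly the same support set for both. Since the moves $\pm(\eb_k-\eb_{k+1})$ with $k\in I_\bullet$ only shuffle entries within the pairs swapped by $\sigma^{(\vep)}$ and do not act transitively on $B_l$, your propagation establishes the single-term support only on a small orbit of the extremal weight, not on all of $B_l$.

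The paper closes exactly this point by a different route: it keeps $\alpha,\beta$ arbitrary and combines the $k_i$-condition \eqref{(1)} with the $e_i$- and $f_i$-conditions \eqref{(2)},\eqref{(3)} for $i\in I_\bullet$, which together (using that coefficients indexed by vectors with negative entries vanish) force $\alpha_i=\beta_{i+1}$, $\beta_i=\alpha_{i+1}$, i.e.\ $\beta=\sigma^{(\vep)}(\alpha)$, before the $b_j$-relations are used at all; the $b_j$-relations then only produce the scalar recursion and the constraint on $\prod\gamma_j$. If you want to keep your propagation picture, you would have to choose paths through $B_l$ along which one of the two terms of $b_iv_\alpha$ always vanishes (e.g.\ $\alpha_{i+2}=0$ so that only the $f_i$-part survives) and show such paths reach every weight; as written the step fails.
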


\begin{proof}
In the proof we assume $i\in I_\bullet,j\in I_\circ$. Define $K_\alpha^\beta$ by
$K(x)v_\alpha=\sum_\beta K_\alpha^\beta v^*_\beta$. Note that $K_\alpha^\beta$ also
depends on $x$. Comparing the coefficients of $v^*_\beta$ in 
$K(x)\pi_{l,x}(a)v_\alpha=\pi^*_{l,x^{-1}}(a)K(x)v_\alpha$ with $k_i,e_i,f_i,b_j$  we obtain 
\begin{align}
&K_\alpha^\beta\ne0\quad\Rightarrow\quad \alpha_i-\alpha_{i+1}=-\beta_i+\beta_{i+1},
\label{(1)}\\
&[\beta_i+1]K_\alpha^{\beta+\eb_i-\eb_{i+1}}
=x^{2\delta_{i,0}}[\alpha_{i+1}]K_{\alpha+\eb_i-\eb_{i+1}}^{\beta}, \label{(2)}\\
&[\alpha_i+1]K_\alpha^{\beta+\eb_i-\eb_{i+1}}
=x^{2\delta_{i,0}}[\beta_{i+1}]K_{\alpha+\eb_i-\eb_{i+1}}^{\beta}, \label{(3)}\\
&x^{\delta_{j,0}}[\beta_{j+1}+1]K_\alpha^{\beta-\eb_j+\eb_{j+1}}
-x^{-\delta_{j,0}-\delta_{j,1}-\delta_{j,-1}}q^{-1}\gamma_j
[\beta_{j-1}+1]K_\alpha^{\beta+\eb_{j-1}-\eb_{j+2}} \nonumber\\
&\qquad=x^{-\delta_{j,0}}[\alpha_j]K_{\alpha-\eb_j+\eb_{j+1}}^{\beta}
-x^{\delta_{j,0}+\delta_{j,1}+\delta_{j,-1}}q^{-1}\gamma_j[\alpha_{j+2}]
K_{\alpha+\eb_{j-1}-\eb_{j+2}}^{\beta}.
\label{(4)}
\end{align}
Since we look for a nontrivial solution, we assume the right hand side of \eqref{(1)}.
This condition together with \eqref{(2)},\eqref{(3)} implies 
\begin{equation} \label{(5)}
\alpha_i=\beta_{i+1},\quad \beta_i=\alpha_{i+1}
\end{equation}
or equivalently $\beta=\sigma^{(\vep)}(\alpha)$. Then \eqref{(2)} or \eqref{(3)} reduces to
\begin{equation} \label{(a)}
K_\alpha^{\sigma^{(\vep)}(\alpha)}
=x^{2\delta_{i,0}}K_{\alpha+\eb_i-\eb_{i+1}}^{\sigma^{(\vep)}(\alpha+\eb_i-\eb_{i+1})}.
\end{equation}
Similarly, assuming \eqref{(5)}, \eqref{(4)} reduces to 
\begin{align*}
&x^{\delta_{j,0}}[\alpha_{j+2}](K_\alpha^{\beta-\eb_j+\eb_{j+1}}
+x^{\delta_{j,1}+\delta_{j,-1}}q^{-1}\gamma_jK_{\alpha+\eb_{j-1}-\eb_{j+2}}^{\beta})\\
&\qquad=x^{-\delta_{j,0}}[\alpha_j](K_{\alpha-\eb_j+\eb_{j+1}}^{\beta}
+x^{-\delta_{j,1}-\delta_{j,-1}}q^{-1}\gamma_jK_\alpha^{\beta+\eb_{j-1}-\eb_{j+2}}).
\end{align*}
If $\beta=\sigma^{(\vep)}(\alpha)+\eb_j-\eb_{j+1}$, the right hand side vanishes,
whereas if $\beta=\sigma^{(\vep)}(\alpha+\eb_j-\eb_{j+1})$, the left one does.
Under \eqref{(a)}, both conditions reduce to 
\[
K_{\alpha+\eb_{j-1}-\eb_{j+1}}^{\sigma^{(\vep)}(\alpha+\eb_{j-1}-\eb_{j+1})}/
K_\alpha^{\sigma^{(\vep)}(\alpha)}=-x^{\delta_{j,1}-\delta_{j,-1}}q^{-1}\gamma_j.
\]
Multiplying the above equation for $j=\vep,2+\vep,\ldots,2n-2+\vep$, we obtain 
the condition for $K$ to exist, and we obtain the unique solution up to scalar multiple.
\end{proof}

In view of this proposition, we set $\gamma_j=-q$ for any $j\in I_\circ$ later in this note.

\begin{theorem} \label{th:ref eq}
The reflection equation
\begin{align}\label{sin}
K_1(x)R^*((xy)^{-1})K_1(y)R(xy^{-1})
= 
R^{* *}(xy^{-1})K_1(y)R^*((xy)^{-1})K_1(x)
\end{align}
holds as a linear map $V_{l,x} \otimes V_{m,y} \rightarrow V^*_{l,x^{-1}} \otimes V^*_{m,y^{-1}}$.
Here $K_1(x)=K(x)\ot1$.
\end{theorem}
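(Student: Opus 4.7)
The plan is to exploit the intertwining structure of $K(x)$ and the $R$-matrices so that both sides of \eqref{sin} become compositions of $\U^\iota_\vep$-equivariant maps, reduce the problem to a one-dimensional space of intertwiners by a uniqueness argument, and finally fix the overall scalar with a single normalization check.

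First, I would verify that both sides of \eqref{sin} are $\U^\iota_\vep$-intertwiners from $V_{l,x}\ot V_{m,y}$ to $V^*_{l,x^{-1}}\ot V^*_{m,y^{-1}}$. The matrices $R$, $R^*$ and $R^{**}$ are $\U$-intertwiners by \eqref{LL1}--\eqref{LL3} and hence in particular $\U^\iota_\vep$-intertwiners. For $K_1(x)=K(x)\ot 1$, the coideal property $\Delta(a)\in\U^\iota_\vep\ot\U$ for $a\in\U^\iota_\vep$, combined with the intertwining relation \eqref{intertwine}, implies that $K_1(x)$ commutes with the $\U^\iota_\vep$-action on the relevant tensor products; the same argument applies to $K_1(y)$. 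Composing these factors along each side of \eqref{sin} therefore yields two $\U^\iota_\vep$-equivariant maps between the same pair of modules.

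Next I would argue that such intertwiners are unique up to a scalar for generic $x$ and $y$, by extending the method of Proposition \ref{prop:K}. Expanding a candidate intertwiner as $\Phi(v_\alpha\ot v_\beta)=\sum_{\gamma,\delta}\Phi^{\gamma,\delta}_{\alpha,\beta}v^*_\gamma\ot v^*_\delta$, the relations coming from $k_i,e_i,f_i$ with $i\in I_\bullet$ restrict the support of $\Phi$ to pairs $(\gamma,\delta)$ obeying a weight-matching condition analogous to the one encountered in the proof of Proposition \ref{prop:K}. The relations from $b_j$ with $j\in I_\circ$, computed via the coproduct using Lemma \ref{lem:bi}, should then propagate a single initial value to every allowed index. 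Once uniqueness is secured, it suffices to evaluate both sides of \eqref{sin} on an extremal vector such as $v_{(l,0,\ldots,0)}\ot v_{(m,0,\ldots,0)}$ and verify that the two resulting scalars coincide; Proposition \ref{prop:K} gives a closed form for $K(x)$ on a basis element and the $R$-matrices act transparently on highest-weight vectors, so this reduces to a bookkeeping check in $x$, $y$, $q$ and $\gamma_j=-q$.

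The main obstacle I anticipate is the uniqueness step. For a single tensor factor as in Proposition \ref{prop:K}, the weight grading together with the small number of generators nearly pins down the answer by itself. On the twofold tensor product, the coefficients $\Phi^{\gamma,\delta}_{\alpha,\beta}$ carry many more free indices, and the action of $b_j$ via $\Delta$ becomes a non-local operator mixing neighbouring entries across the two factors. Showing that these constraints really do propagate a unique scalar throughout, with no new global obstruction beyond $\prod_{j\in I_\circ}\gamma_j=(-q)^n$, is where the genuine technical work lies.
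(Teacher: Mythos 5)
Your overall skeleton is exactly the one the paper (following Theorem~1 of \cite{KOYo}) uses: both sides of \eqref{sin} are $\U^\iota_\vep$-intertwiners because the $R$ matrices intertwine the full $\U$-action and $K_1$ intertwines the coideal action via $\Delta(\U^\iota_\vep)\subset\U^\iota_\vep\ot\U$; such intertwiners are unique up to scalar; and the scalar is fixed by evaluating on one vector. The divergence, and the gap, is in how uniqueness is secured. The paper does \emph{not} redo a coefficient analysis in the style of Proposition~\ref{prop:K} on the twofold tensor product; instead it proves that $V_{l,x}\ot V_{m,y}$ is irreducible as a $\U^\iota_\vep$-module (Theorem~\ref{th:irred}), from which uniqueness up to scalar follows, and that irreducibility statement is the entire content of Section~4: the decomposition of $V_l\ot V_m$ under $\U(I_\bullet)\simeq U_q(sl_2)^{\ot n}$ into the span of the vectors $\wb^{(\lb,\mb)}_{\jb}$, the explicit action of $b_i$, $b_if_{i\pm1}$ and $b_if_{i-1}f_{i+1}$ on them, and a full-rank argument (Proposition~\ref{prop:fr}) making an induction on $|\jb|$ work.

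You correctly identify this uniqueness step as the genuine technical obstacle, but your proposed route through it is only a plan, and it underestimates the difficulty. On a single factor the weight constraints from $k_i,e_i,f_i$ ($i\in I_\bullet$) already force $\beta=\sigma^{(\vep)}(\alpha)$, leaving a one-dimensional recursion; on $V_{l,x}\ot V_{m,y}$ the $\U(I_\bullet)$-module structure has high multiplicities (the spaces $\bigoplus\U(I_\bullet)\wb^{(\lb,\mb)}_{\jb}$ over all $\lb,\mb,\jb$ with fixed weights), so the $e_i,f_i,k_i$ relations leave a large intertwiner space and everything hinges on showing that the $\Delta(b_j)$ relations collapse it to one dimension. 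That is not a routine propagation argument: it is why the paper needs the delicate leading-order analysis at $q=0$ in Proposition~\ref{prop:fr} (with the specific choice $\gamma_j=-q$) to control the coefficients $A_t$ of Proposition~\ref{prop:bffw}. As written, your proof would be complete only once this step is supplied, either by carrying out your direct coefficient scheme in full or by importing the irreducibility theorem. A smaller caveat: the normalization check is not quite as ``transparent'' as claimed, since $K(x)$ sends $v_{(l,0,\ldots,0)}$ to a multiple of $v^*_{\sigma^{(\vep)}(l,0,\ldots,0)}$, which is no longer an extremal vector of the obvious kind, so one must track a specific matrix entry through all four factors rather than a highest-weight line; this is routine but should be stated.
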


The proof is completely the same as that of Theorem 1 in \cite{KOYo} under the
assumption that $V_{l,x}\ot V_{m,y}$ is irreducible as a $\U^\iota_\vep$-module,
which is shown in next section.

\section{Proof of the irreducibility of $V_{l,x}\ot V_{m,y}$}

To show that the reflection equation holds (Theorem \ref{th:ref eq}), we need to prove
\begin{theorem} \label{th:irred}
As a ${\bf U}^\iota_\vep$-module, $V_{l,x}\ot V_{m,y}$ is irreducible.
\end{theorem}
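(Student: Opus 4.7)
The plan is to show that every nonzero $\U^\iota_\vep$-submodule $W\subseteq V_{l,x}\ot V_{m,y}$ must be the whole space. The key structural input is that $I_\bullet$ consists of pairwise non-adjacent nodes in the affine Dynkin diagram of $A^{(1)}_{2n-1}$, so the Levi subalgebra $\U_\bullet\subseteq\U^\iota_\vep$ generated by $e_i,f_i,k_i\,(i\in I_\bullet)$ is isomorphic to $U_q(\mathfrak{sl}_2)^{\ot n}$. The $2n$ indices split into $n$ cyclic pairs $\{i,i+1\}$ ($i\in I_\bullet$), and each $U_q(\mathfrak{sl}_2)$-factor of $\U_\bullet$ acts on one such pair of coordinates, both in $\alpha$ and in $\beta$ via the coproduct. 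Consequently $V_{l,x}\ot V_{m,y}$ decomposes as a $\U_\bullet$-module into a direct sum of external tensor products of pairwise Clebsch-Gordan summands, indexed by the pair-sums $(l_1,\dots,l_n)$ and $(m_1,\dots,m_n)$ together with Clebsch-Gordan multiplicity indices.

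The first step is then: given a nonzero $w\in W$, apply elements of $\U_\bullet$ to produce a $\U_\bullet$-highest weight vector in $W$, which automatically yields the whole corresponding $\U_\bullet$-irreducible summand. It remains to reach every $\U_\bullet$-isotypic summand, which is done by acting with the remaining generators $b_j$, $j\in I_\circ$. The action of $\Delta(b_j)$ decomposes via Lemma~\ref{lem:bi} on each tensor factor into two elementary moves — shifting one unit of mass from $\alpha_j$ to $\alpha_{j+1}$ or from $\alpha_{j+2}$ to $\alpha_{j-1}$ — and these moves redistribute mass between neighbouring cyclic pairs, hence between adjacent summands in the $\U_\bullet$-decomposition. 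The crucial claim is that, up to lower-order corrections in a suitable filtration, $\Delta(b_j)$ maps a $\U_\bullet$-highest weight vector to a nonzero $\U_\bullet$-highest weight vector in a neighbouring summand, so that iteration in $j$ eventually connects every pair of summands on the combinatorial level of compositions.

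The chief obstacle is the explicit computation of $\Delta(b_j)$ and of its action on $\U_\bullet$-highest weight vectors. Because $b_j=f_j+\gamma_j T_{w_\bullet}(e_j)k_j^{-1}$ contains the cubic element $T_{w_\bullet}(e_j)$, its coproduct expands into many summands that could conceivably conspire to cancel the designated leading contribution. The technical heart of the proof is therefore to introduce an appropriate filtration (for instance by the pair-sums $(l_1,\dots,l_n)$ of the first tensor factor), identify the dominant term of $\Delta(b_j)$ with respect to it, and verify that this dominant term is nonzero on each highest weight vector encountered, at least for $x/y$ in a Zariski-dense subset, which suffices since irreducibility is a closed condition. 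Once this is in place, transitivity of the resulting $b_j$-moves on the set of $\U_\bullet$-isotypic summands is essentially a finite combinatorial check on cyclic compositions, completing the proof of $\U^\iota_\vep$-irreducibility of $V_{l,x}\ot V_{m,y}$.
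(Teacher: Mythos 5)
Your outline follows the same route as the paper (restrict to the Levi subalgebra $\U(I_\bullet)\simeq U_q(sl_2)^{\ot n}$, decompose $V_l\ot V_m$ into highest weight vectors $\wb^{(\lb,\mb)}_{\jb}$, then use the $b_j$ to travel between isotypic pieces), but it skips over the step that carries essentially all of the difficulty. You treat the problem as a connectivity question on the set of $\U(I_\bullet)$-summands, asserting that a highest weight vector in $W$ ``automatically yields the whole corresponding summand.'' The decomposition has large multiplicity spaces: every $(\lb,\mb,\jb)$ with the same values of $l_s+m_s-2j_s$ gives the same $\U(I_\bullet)$-highest weight, so the most that $\U(I_\bullet)$ alone extracts from a nonzero element of $W$ is an unknown linear combination $\sum c(\lb,\mb,\jb)\wb^{(\lb,\mb)}_{\jb}$ ranging over an entire multiplicity space. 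Resolving this multiplicity is the actual content of the proof: one first uses that $b_i$ strictly \emph{lowers} $|\jb|$ (Proposition~\ref{prop:bw}) to push such a combination down to $\jb=\ob$; one then isolates the individual vectors $\wb^{(\lb,\mb)}_{\ob}$ by comparing two different operators ($b_2f_1$ against $(b_4f_5)\cdots(b_{2n-2}f_{2n-1})(b_0f_1)$) whose images of the same vector are linearly independent; and one finally climbs back up in $|\jb|$ via a full-rank statement (Proposition~\ref{prop:fr}) for the whole matrix of coefficients of the operators $b_if_{i-1}f_{i+1}$, established by a leading-order-in-$q$ determinant computation. Your criterion ``the dominant term is nonzero on each highest weight vector encountered'' is strictly weaker than full rank --- a submodule could still meet a multiplicity space in a proper subspace even if every individual image is nonzero --- and your proposed filtration by the pair-sums of the first tensor factor does not see the multiplicity index $\jb$ at all. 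Note also that $b_j$ by itself only decreases $|\jb|$; to increase it one must use the composites $b_jf_{j\pm1}$ and $b_jf_{j-1}f_{j+1}$, which your outline never introduces.

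A second, smaller gap is the reduction to ``$x/y$ in a Zariski-dense subset, which suffices since irreducibility is a closed condition.'' This is backwards: for a finite-dimensional module depending algebraically on parameters, irreducibility is an \emph{open} condition (by Burnside's theorem it amounts to the non-vanishing of a determinant built from matrix coefficients), so it is reducibility that is closed, and generic irreducibility would leave the special parameter values unproved. The paper instead works at the single specialization $x=y=1$, where the explicit formulas above are available.
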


Actually, even when the spectral parameters $x,y$ are specialized to 1, it is irreducible
as we will see below. Hence, in this section we set $x=y=1$, since it is enough to 
show the theorem. $V_{l,1}$ will be denoted by $V_l$. 
We can also restrict our proof to the $\vep=0$ case, since
the consideration for the $\vep=1$ case is just the repetition by shifting the index $i$
of the generators or the entries of $\alpha$. Finally, in view of Proposition 
\ref{prop:K}, we specialize $\gamma_i$ for $i\in I_\circ$ to be $-q$.

\subsection{Representation theory of $U_q(sl_2)$}

$U_q(sl_2)$ is the subalgebra of $\U$ generated only by $e_1,f_1,k_1$. Its irreducible 
representations are parametrized by their dimensions which run positive integers.
Let $U_l$ be the $(l+1)$-dimensional module of $U_q(sl_2)$. As a basis of $U_l$,
one can take $\{v_\alpha|\,|\alpha|=l\}$ in \eqref{obt1} with $n=1$. 
The actions of the generators $e_1,f_1,k_1$ are given by \eqref{otm1}-\eqref{otm3}.
It is well known that $U_l\ot U_m$ decomposes into $\min(l,m)+1$
components as
\[
U_l\ot U_m\simeq \bigoplus_{j=0}^{\min(l,m)}U_{l+m-2j}
\]
where a highest weight vector of $U_{l+m-2j}$ is given by 
\begin{equation} \label{w}
w^{(l,m)}_j=\sum_{p=0}^j(-1)^pq^{p(l-p+1)}{j\brack p}v_{(l-p,p)}\ot v_{(m-j+p,j-p)}.
\end{equation}
Here ${j\brack p}$ is the $q$-binomial coefficient defined by $\frac{[j]!}{[p]![j-p]!}$.

Now consider the subalgebra $\U(I_{\bullet})$ of $\U^\iota$ generated by 
$e_i,f_i,k_i\,(i\in I_\bullet)$. Recall $I_\bullet=\{1,3,\ldots,2n-1\}$.
$\U(I_{\bullet})$ is isomorphic to $U_q(sl_2)^{\ot n}$.
We want to construct a basis of $V_l\ot V_m$ using its $\U(I_\bullet)$-module 
structure. To parametrize the highest weight vectors of 
$V_l\ot V_m$, we introduce $n$-tuples of nonnegative integers 
$\lb=(l_1,\ldots,l_n),\mb=(m_1,\ldots,m_n)$ such that $|\lb|=l,|\mb|=m$.
Here we use the notation $|\lb|$ to signify the sum of entries of the vector $\lb$
irrespective of the number of entries. Let
\[
\iota:\bigoplus_{\lb,\mb}
(U_{l_1}\ot U_{m_1})\ot\cdots\ot(U_{l_n}\ot U_{m_n})\longrightarrow V_l\ot V_m
\]
be the linear map sending $(v_{(\alpha_1,\alpha_2)}\ot v_{(\beta_1,\beta_2)})\ot\cdots\ot
(v_{(\alpha_{2n-1},\alpha_{2n})}\ot v_{(\beta_{2n-1},\beta_{2n})})$ to $v_\alpha\ot v_\beta$.
Note that $U_{l_i}\ot U_{m_i}$ is the tensor product of the irreducible highest weight 
modules $U_{l_i},U_{m_i}$ of the $i$-th $U_q(sl_2)$ of $U_q(sl_2)^{\ot n}$ generated by 
$e_{2i-1},f_{2i-1},k_{2i-1}$. Since $U_q(sl_2)$ in different positions commute with
each other, one obtains the following proposition.

\begin{proposition} \label{prop:U(I_bullet)}
For any $\lb,\mb$ and $\jb=(j_1,\ldots,j_n)$ such that
$0\le j_i\le\min(l_i,m_i)$ for $1\le i\le n$,
\[
\wb^{(\lb,\mb)}_{\jb}
=\iota(w^{(l_1,m_1)}_{j_1}\ot\cdots\ot w^{(l_n,m_n)}_{j_n})
\]
is a $\U(I_\bullet)$-highest weight vector, and we have
$\bigoplus_{\lb,\mb,\jb}\U(I_\bullet)\wb^{(\lb,\mb)}_{\jb}=V_l\ot V_m$.
\end{proposition}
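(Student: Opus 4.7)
The plan is to reduce the proposition to the classical $U_q(sl_2)$ Clebsch--Gordan decomposition, applied tensor factor by tensor factor, via the map $\iota$. The key structural fact I would establish first is that $\iota$ is a $\U(I_\bullet)$-module isomorphism when the source is equipped with the natural $U_q(sl_2)^{\ot n}$-module structure, the $k$-th copy of $U_q(sl_2)$ acting on the $k$-th pair $U_{l_k}\ot U_{m_k}$ via its own coproduct and trivially on the remaining factors.

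To prove this equivariance, I would use two observations. First, no two elements of $I_\bullet=\{1,3,\ldots,2n-1\}$ are adjacent in the Dynkin diagram of $A^{(1)}_{2n-1}$, so $a_{2k-1,2k'-1}=0$ for $k\ne k'$ and the $n$ copies of $U_q(sl_2)$ generated by $e_{2k-1},f_{2k-1},k_{2k-1}^{\pm1}$ commute inside $\U$, yielding $\U(I_\bullet)\cong U_q(sl_2)^{\ot n}$. Second, the formulas \eqref{otm1}--\eqref{otm3} show that a generator of index $2k-1$ modifies only the entries $\alpha_{2k-1},\alpha_{2k}$ of $v_\alpha$. Hence $V_l$ decomposes as a $\U(I_\bullet)$-module into $\bigoplus_{|\lb|=l}U_{l_1}\ot\cdots\ot U_{l_n}$ and similarly for $V_m$. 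I would then verify $\iota$-equivariance by direct comparison: applying $\Delta(e_{2k-1})=e_{2k-1}\ot 1+k_{2k-1}\ot e_{2k-1}$ to $v_\alpha\ot v_\beta$ produces the same result as the $k$-th $U_q(sl_2)$ coproduct acting on $v_{(\alpha_{2k-1},\alpha_{2k})}\ot v_{(\beta_{2k-1},\beta_{2k})}$, because $k_{2k-1}v_\alpha=q^{\alpha_{2k-1}-\alpha_{2k}}v_\alpha$ depends only on the relevant pair of indices. The same check works for $f_{2k-1}$ and $k_{2k-1}^{\pm1}$.

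With $\iota$ established as a $\U(I_\bullet)$-module isomorphism, both assertions of the proposition reduce to rank-one statements applied factorwise. From \eqref{w}, each $w^{(l_k,m_k)}_{j_k}$ is a $U_q(sl_2)$-highest weight vector in $U_{l_k}\ot U_{m_k}$, hence its tensor product over $k$ is annihilated by every $e_{2k-1}$, so $\wb^{(\lb,\mb)}_{\jb}=\iota(w^{(l_1,m_1)}_{j_1}\ot\cdots\ot w^{(l_n,m_n)}_{j_n})$ is a $\U(I_\bullet)$-highest weight vector. For the direct sum decomposition, I would tensor the Clebsch--Gordan splittings
\[
U_{l_k}\ot U_{m_k}=\bigoplus_{j_k=0}^{\min(l_k,m_k)}U_q(sl_2)\cdot w^{(l_k,m_k)}_{j_k}
\]
over all $k$ to obtain
\[
\bigotimes_{k=1}^n(U_{l_k}\ot U_{m_k})=\bigoplus_{\jb}\U(I_\bullet)\cdot\bigl(w^{(l_1,m_1)}_{j_1}\ot\cdots\ot w^{(l_n,m_n)}_{j_n}\bigr),
\]
then sum over all $\lb,\mb$ with $|\lb|=l,|\mb|=m$ and apply $\iota$.

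The main obstacle is really the bookkeeping in establishing the $\iota$-equivariance: the idea is transparent but one must carefully match the single coproduct $\Delta$ of $\U$ with the $n$ independent coproducts of the commuting $U_q(sl_2)$'s, especially verifying that $k_{2k-1}$ acting on $v_\alpha$ produces the same scalar as the $k$-th copy's Cartan element on $v_{(\alpha_{2k-1},\alpha_{2k})}$. Once the equivariance of $\iota$ is in hand, the conclusion is immediate from tensoring rank-one decompositions, and no subtlety arises from the cyclic shape of the affine Dynkin diagram since the nodes in $I_\bullet$ are mutually non-adjacent.
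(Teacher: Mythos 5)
Your proposal is correct and follows essentially the same route the paper takes: the paper's entire justification is the observation that the $U_q(sl_2)$'s at the mutually non-adjacent nodes of $I_\bullet$ commute, so that $\iota$ identifies $V_l\ot V_m$ with a direct sum of products $U_{l_k}\ot U_{m_k}$ and the statement reduces factorwise to the rank-one Clebsch--Gordan decomposition with highest weight vectors \eqref{w}. You have merely made explicit the equivariance check for $\iota$ that the paper leaves implicit.
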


\subsection{Necessary formulas}

In what follows, we assume $i\in I_\circ=\{0,2,\ldots,2n-2\}$ and set $i=2s$.
By abuse of notation, we denote by $\eb_s$ ($s=1,\ldots,n$) the $s$-th standard basis vector of the $n$-dimensional space, although we have used it in section \ref{sec:Uq}
for the $2n$-dimensional space. $\eb_0$ should be understood as $\eb_n$.
For the action of $\U$ on the tensor product, we abbreviate $\Delta$.

\begin{proposition} \label{prop:bw}
On $V_{l}\ot V_{m}$, we have
\[
b_i\wb^{(\lb,\mb)}_{\jb}
=D'_1\wb^{(\lb-\eb_s+\eb_{s+1},\mb)}_{\jb-\eb_s}
+D'_2\wb^{(\lb,\mb-\eb_s+\eb_{s+1})}_{\jb-\eb_s}
+D'_3\wb^{(\lb+\eb_s-\eb_{s+1},\mb)}_{\jb-\eb_{s+1}}
+D'_4\wb^{(\lb,\mb+\eb_s-\eb_{s+1})}_{\jb-\eb_{s+1}},
\]
where 
\begin{align*}
D'_1&=-q^{-j_s-j_{s+1}+l_s+m_{s+1}+1}[j_s],\quad
D'_2=[j_s],\\
D'_3&=-q^{-j_s-j_{s+1}+l_{s+1}+m_{s+1}+1}[j_{s+1}],\quad
D'_4=q^{-2j_s-2j_{s+1}+l_s+l_{s+1}+2m_{s+1}+2}[j_{s+1}].
\end{align*}
\end{proposition}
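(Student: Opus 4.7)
The plan is to compute $b_{2s}\wb^{(\lb,\mb)}_\jb$ in three stages: first a structural reduction to the span of the highest-weight basis, then a weight-and-support cut to exactly four candidate terms, and finally an explicit extraction of the four coefficients.

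First I would verify that $b_{2s}$ preserves the space of $\U(I_\bullet)$-highest weight vectors in $V_l\ot V_m$. In $\U$ the element $b_{2s}$ commutes with $e_j$ for every $j\in I_\bullet$: for $j=2s\pm 1$ this is Proposition~\ref{prop:eb=be}, while for the remaining $j\in I_\bullet$ it follows from the defining relations, since $b_{2s}$ only involves generators with label in $\{2s-1,2s,2s+1\}$ and these commute trivially with $e_j$ once $j$ is far enough. Since $\Delta$ is a homomorphism, $\Delta(b_{2s})$ commutes with $\Delta(e_j)$ for every $j\in I_\bullet$, so $b_{2s}\wb^{(\lb,\mb)}_\jb$ is again $\U(I_\bullet)$-highest weight and, by Proposition~\ref{prop:U(I_bullet)}, lies in the span of the $\wb^{(\lb',\mb')}_{\jb'}$.

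Next, I would restrict the expansion to exactly the four listed terms by a weight and support argument. Both summands $f_{2s}$ and $T_{w_\bullet}(e_{2s})k_{2s}^{-1}$ of $b_{2s}$ have $k_{2s-1}$- and $k_{2s+1}$-weight $+1$ and weight $0$ under all other $k_{2i-1}$. Since $\wb^{(\lb,\mb)}_\jb$ has $k_{2i-1}$-weight $l_i+m_i-2j_i$, any surviving label must satisfy $l'_i+m'_i-2j'_i=l_i+m_i-2j_i$ for $i\neq s,s+1$ and increase that quantity by one for $i=s,s+1$. Moreover, the two elementary shifts dictated by Lemma~\ref{lem:bi}, namely $\alpha\mapsto\alpha-\eb_{2s}+\eb_{2s+1}$ from $f_{2s}$ and $\alpha\mapsto\alpha+\eb_{2s-1}-\eb_{2s+2}$ from $T_{w_\bullet}(e_{2s})k_{2s}^{-1}$, preserve the pairs $(l_i,m_i)$ for $i\neq s,s+1$; through $\Delta$ each shift can fall on the left tensorand (affecting $\lb$) or on the right (affecting $\mb$). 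The four possibilities match the four terms in the statement, with the weight equation forcing $\jb-\eb_s$ in the first case and $\jb-\eb_{s+1}$ in the second. A useful observation is that the four target vectors have pairwise disjoint $(\alpha_{2s-1}+\alpha_{2s},\,\beta_{2s-1}+\beta_{2s})$-support, namely $(l_s-1,m_s)$, $(l_s,m_s-1)$, $(l_s+1,m_s)$, $(l_s,m_s+1)$, so each $D'_k$ can be read off independently.

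To compute the four coefficients I would expand $\Delta(b_{2s})=\Delta(f_{2s})+\gamma_{2s}\Delta(T_{w_\bullet}(e_{2s}))\Delta(k_{2s}^{-1})$ with $\gamma_{2s}=-q$ and apply it to the explicit form $\wb^{(\lb,\mb)}_\jb=\iota(\bigotimes_i w^{(l_i,m_i)}_{j_i})$ from \eqref{w}. For each of the four target vectors I would pick a single basis vector $v_\alpha\ot v_\beta$ in its support where the coefficient in the target is a clean monomial, typically one of the boundary summands $p_s=0$ or $p_s=j_s$ in \eqref{w}, and read off the coefficient of the same $v_\alpha\ot v_\beta$ on the left-hand side to solve for $D'_k$. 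The main obstacle is this last step: each cubic monomial in $T_{w_\bullet}(e_{2s})$ splits into eight tensor-product terms under $\Delta$, many of them genuinely mixed and all of them carrying $q$-power prefactors generated as $e_{2s\pm 1},e_{2s}$ are commuted past Cartan elements; collapsing these raw sums into the compact forms for $D'_1,\ldots,D'_4$ requires careful bookkeeping together with standard $q$-binomial identities such as ${j\brack p}=q^p{j-1\brack p}+q^{-(j-p)}{j-1\brack p-1}$.
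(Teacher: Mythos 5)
Your proposal follows the same route as the paper: Proposition~\ref{prop:eb=be} (plus trivial commutation with the distant $e_j$) shows $b_i\wb^{(\lb,\mb)}_{\jb}$ is again $\U(I_\bullet)$-highest weight, a weight/block-grading argument cuts the expansion to the four listed terms, and the coefficients are then extracted by direct computation, which is exactly what the paper does (its proof states the last step simply as ``the four coefficients can be calculated directly''). The only point to tighten is your phrase that each shift ``falls on the left or the right tensorand'': for the mixed coproduct terms of $T_{w_\bullet}(e_{2s})$ the three root shifts are split between the two factors, and the reduction to four sectors really rests on the fact that only the $e_{2s}$ factor alters the block weights $(\alpha_{2i-1}+\alpha_{2i},\beta_{2i-1}+\beta_{2i})$ --- but this is a presentational gap, not a mathematical one.
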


\begin{proof}
Using Proposition \ref{prop:eb=be}, one finds that 
$b_i\wb^{(\lb,\mb)}_{\jb}$ is a $\U(I_\bullet)$-highest weight vector.
By the weight consideration, it should be a linear combination of the following vectors.
\[
\wb^{(\lb-\eb_s+\eb_{s+1},\mb)}_{\jb-\eb_s},\,
\wb^{(\lb,\mb-\eb_s+\eb_{s+1})}_{\jb-\eb_s},\,
\wb^{(\lb+\eb_s-\eb_{s+1},\mb)}_{\jb-\eb_{s+1}},\,
\wb^{(\lb,\mb+\eb_s-\eb_{s+1})}_{\jb-\eb_{s+1}}.
\]
The four coefficients can be calculated directly.
\end{proof}

\begin{proposition} \label{prop:bfw}
On $V_{l}\ot V_{m}$, we have
\begin{align*}
b_if_{i-1}\wb^{(\lb,\mb)}_{\jb}=\quad&\frac{[l_s+m_s-j_s+1]}{[l_s+m_s-2j_s+1]}(B'_1\wb^{(\lb-\eb_s+\eb_{s+1},\mb)}_{\jb}+B'_2\wb^{(\lb,\mb-\eb_s+\eb_{s+1})}_{\jb})\\
+&\frac{[j_{s+1}]}{[l_s+m_s-2j_s+1]}(B'_3\wb^{(\lb+\eb_s-\eb_{s+1},\mb)}_{\jb+\eb_s-\eb_{s+1}}+B'_4\wb^{(\lb,\mb+\eb_s-\eb_{s+1})}_{\jb+\eb_s-\eb_{s+1}})\\
+&\frac{[l_s+m_s-2j_s]}{[l_s+m_s-2j_s+1]}(D'_1f_{i-1}\wb^{(\lb-\eb_s+\eb_{s+1},\mb)}_{\jb-\eb_s}+D'_2f_{i-1}\wb^{(\lb,\mb-\eb_s+\eb_{s+1})}_{\jb-\eb_s}\\
+&D'_3f_{i-1}\wb^{(\lb+\eb_s-\eb_{s+1},\mb)}_{\jb-\eb_{s+1}}+D'_4f_{i-1}\wb^{(\lb,\mb+\eb_s-\eb_{s+1})}_{\jb-\eb_{s+1}}),\\
\end{align*}
\begin{align*}
b_if_{i+1}\wb^{(\lb,\mb)}_{\jb}=\quad&\frac{[j_s]}{[l_{s+1}+m_{s+1}-2j_{s+1}+1]}(C'_1\wb^{(\lb-\eb_s+\eb_{s+1},\mb)}_{\jb-\eb_s+\eb_{s+1}}+C'_2\wb^{(\lb,\mb-\eb_s+\eb_{s+1})}_{\jb-\eb_s+\eb_{s+1}})\\
+&\frac{[l_{s+1}+m_{s+1}-j_{s+1}+1]}{[l_{s+1}+m_{s+1}-2j_{s+1}+1]}(C'_3\wb^{(\lb+\eb_s-\eb_{s+1},\mb)}_{\jb}+C'_4\wb^{(\lb,\mb+\eb_s-\eb_{s+1})}_{\jb})\\
+&\frac{[l_{s+1}+m_{s+1}-2j_{s+1}]}{[l_{s+1}+m_{s+1}-2j_{s+1}+1]}(D'_1f_{i+1}\wb^{(\lb-\eb_s+\eb_{s+1},\mb)}_{\jb-\eb_s}+D'_2f_{i+1}\wb^{(\lb,\mb-\eb_s+\eb_{s+1})}_{\jb-\eb_s}\\
+&D'_3f_{i+1}\wb^{(\lb+\eb_s-\eb_{s+1},\mb)}_{\jb-\eb_{s+1}}+D'_4f_{i+1}\wb^{(\lb,\mb+\eb_s-\eb_{s+1})}_{\jb-\eb_{s+1}}),
\end{align*}
where
\begin{align*}
B'_1&=q^{j_s-j_{s+1}-m_s+m_{s+1}}[l_s-j_s],\quad
B'_2=[m_s-j_s],\\
B'_3&=-q^{j_s-j_{s+1}-l_s-m_s+l_{s+1}+m_{s+1}}[m_s-j_s],\quad
B'_4=-q^{2j_s-2j_{s+1}-l_s-2m_s+l_{s+1}+2m_{s+1}}[l_s-j_s],\\
C'_1&=-q^{-j_s+j_{s+1}+l_s-l_{s+1}}[m_{s+1}-j_{s+1}],\quad
C'_2=-[l_{s+1}-j_{s+1}],\\
C'_3&=q^{-j_s+j_{s+1}}[l_{s+1}-j_{s+1}],\quad
C'_4=q^{-2j_s+2j_{s+1}+l_s-l_{s+1}}[m_{s+1}-j_{s+1}],\\
\end{align*}
\end{proposition}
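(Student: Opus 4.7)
The key observation is that $b_i$ (with $i=2s\in I_\circ$) is built from $e_{i-1},e_i,e_{i+1},f_i,k_i^{\pm1}$, all of which act only on positions $2s-1,2s,2s+1,2s+2$ of $\alpha,\beta$ in $V_l\ot V_m$; under the decomposition of Proposition \ref{prop:U(I_bullet)}, these are exactly the $s$-th and $(s+1)$-th tensor factors $(U_{l_s}\ot U_{m_s})\ot(U_{l_{s+1}}\ot U_{m_{s+1}})$. Since $f_{i-1}=f_{2s-1}$ acts only on the $s$-th factor, the whole computation localizes to these two slots. Moreover, $[b_i,e_{i'}]=0$ holds for $i'=i\pm1$ by Proposition \ref{prop:eb=be} and trivially for $|i-i'|\ge3$, and $[f_{i-1},e_{i'}]=0$ for $i'\ne i-1$, so $b_if_{i-1}\wb^{(\lb,\mb)}_{\jb}$ is annihilated by every $e_{i'}$ with $i'\in I_\bullet\setminus\{i-1\}$. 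A Cartan-weight count then forces it to lie in the span of the eight vectors on the right-hand side of the proposition.

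The four non-HW coefficients (those multiplying $f_{i-1}\wb^{(\cdot)}$) are pinned down by applying $e_{i-1}$ to both sides. On the left, $[e_{i-1},b_i]=0$, $e_{i-1}\wb^{(\lb,\mb)}_{\jb}=0$, and $[e_{i-1},f_{i-1}]\wb^{(\lb,\mb)}_{\jb}=[l_s+m_s-2j_s]\wb^{(\lb,\mb)}_{\jb}$ yield
\[
e_{i-1}\,b_if_{i-1}\wb^{(\lb,\mb)}_{\jb}=[l_s+m_s-2j_s]\,b_i\wb^{(\lb,\mb)}_{\jb},
\]
which by Proposition \ref{prop:bw} equals $[l_s+m_s-2j_s]\bigl(D'_1\wb^{(\lb-\eb_s+\eb_{s+1},\mb)}_{\jb-\eb_s}+\cdots\bigr)$. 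On the right, the four $B'$-type HW terms are killed, while each $f_{i-1}\wb^{(\lb',\mb')}_{\jb'}$ term produces $[l'_s+m'_s-2j'_s]=[l_s+m_s-2j_s+1]$ times the base HW vector, exactly canceling the common denominator $[l_s+m_s-2j_s+1]$ and matching the asserted $D'_k$'s.

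The HW coefficients $B'_1,\ldots,B'_4$ lie in $\ker e_{i-1}$ and must be obtained by direct computation. The natural route is to expand $\wb^{(\lb,\mb)}_{\jb}$ via the $q$-binomial sum in \eqref{w}, apply $f_{i-1}$ termwise, then apply $b_i$ using Lemma \ref{lem:bi} and the coproduct $\Delta$, and finally match the coefficient of a single conveniently chosen monomial $v_\alpha\ot v_\beta$ whose contribution to precisely one of the four target HW vectors $\wb^{(\lb-\eb_s+\eb_{s+1},\mb)}_{\jb}$, $\wb^{(\lb,\mb-\eb_s+\eb_{s+1})}_{\jb}$, $\wb^{(\lb+\eb_s-\eb_{s+1},\mb)}_{\jb+\eb_s-\eb_{s+1}}$, $\wb^{(\lb,\mb+\eb_s-\eb_{s+1})}_{\jb+\eb_s-\eb_{s+1}}$ is a known $q$-binomial (for instance the $p=0$ summand in each factor of \eqref{w}). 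This is the main obstacle: the bookkeeping of $q$-powers, signs, and $q$-binomial identities is intricate but mechanical, and the four $B'_k$ are uniquely determined by the match. The formula for $b_if_{i+1}\wb^{(\lb,\mb)}_{\jb}$ is proved in completely analogous fashion, with $e_{i+1}$ playing the role of $e_{i-1}$ and the $(s+1)$-th factor playing the role of the $s$-th: applying $e_{i+1}$ pins down the $D'_k$-coefficients after canceling the denominator $[l_{s+1}+m_{s+1}-2j_{s+1}+1]$, and the $C'_k$-coefficients are obtained by the same direct computation.
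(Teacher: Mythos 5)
Your proposal is correct and follows essentially the same route as the paper: use Proposition \ref{prop:eb=be} to compute $e_{i\mp1}b_if_{i\mp1}\wb^{(\lb,\mb)}_{\jb}=[l_s+m_s-2j_s]b_i\wb^{(\lb,\mb)}_{\jb}$ (resp.\ with $s+1$), constrain the shape of $b_if_{i\mp1}\wb^{(\lb,\mb)}_{\jb}$ by weight and depth-one considerations in the relevant $sl_2$-string, read off the $D'$-part from Proposition \ref{prop:bw}, and determine the remaining highest-weight coefficients by direct computation. Your treatment of the $D'$-coefficients (the cancellation of $[l_s+m_s-2j_s+1]$) is in fact slightly more explicit than the paper's ``coefficients can be calculated directly.''
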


\begin{proof}
Using Proposition \ref{prop:eb=be}, we have
\begin{align*}
&e_{i-1}b_if_{i-1}\wb^{(\lb,\mb)}_{\jb}=b_ie_{i-1}f_{i-1}\wb^{(\lb,\mb)}_{\jb}=b_i\{k_{i-1}\}\wb^{(\lb,\mb)}_{\jb}=[l_s+m_s-2j_s]b_i\wb^{(\lb,\mb)}_{\jb},\\
&e_{i+1}b_if_{i+1}\wb^{(\lb,\mb)}_{\jb}=b_ie_{i+1}f_{i+1}\wb^{(\lb,\mb)}_{\jb}=b_i\{k_{i+1}\}\wb^{(\lb,\mb)}_{\jb}=[l_{s+1}+m_{s+1}-2j_{s+1}]b_i\wb^{(\lb,\mb)}_{\jb},
\end{align*}
where $\{k_i\}=\frac{k_i-k^{-1}_i}{q-q^{-1}}$ .
Thus same in Lemma \ref{prop:bw}, $e_{i+1}b_if_{i+1}$ and $e_{i-1}b_if_{i-1}$ are a linear combination of the following vectors.
\[
\wb^{(\lb-\eb_s+\eb_{s+1},\mb)}_{\jb-\eb_s},\,
\wb^{(\lb,\mb-\eb_s+\eb_{s+1})}_{\jb-\eb_s},\,
\wb^{(\lb+\eb_s-\eb_{s+1},\mb)}_{\jb-\eb_{s+1}},\,
\wb^{(\lb,\mb+\eb_s-\eb_{s+1})}_{\jb-\eb_{s+1}}.
\]
By considering weight, one find that $b_if_{i-1}$ and  $b_if_{i+1}$ are a linear combination like a assertion, and coefficients can be calculated directly.
\end{proof}

\begin{corollary} \label{cor:bfw}
On $V_{l}\ot V_{m}$, we have
\begin{align*}
b_if_{i+1}\wb^{(\lb,\mb)}_{\ob}&=
[l_{s+1}]\wb^{(\lb+\eb_s-\eb_{s+1},\mb)}_{\ob}
+q^{l_s-l_{s+1}}[m_{s+1}]\wb^{(\lb,\mb+\eb_s-\eb_{s+1})}_{\ob},\\
b_if_{i-1}\wb^{(\lb,\mb)}_{\ob}
&=q^{m_{s+1}-m_s}[l_s]\wb^{(\lb-\eb_s+\eb_{s+1},\mb)}_{\ob}
+[m_s]\wb^{(\lb,\mb-\eb_s+\eb_{s+1})}_{\ob}.
\end{align*}
\end{corollary}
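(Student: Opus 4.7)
The plan is to obtain Corollary \ref{cor:bfw} as an immediate specialization of Proposition \ref{prop:bfw} at $\jb=\ob$ (the zero vector). The key observation is that each of the four coefficients $D'_1,D'_2,D'_3,D'_4$ carries a factor $[j_s]$ or $[j_{s+1}]$, hence all of them vanish when $j_s=j_{s+1}=0$. Consequently the entire third block of terms in each formula of Proposition \ref{prop:bfw} — the one involving $D'_k f_{i\pm1}\wb^{(\cdots)}_{\jb-\eb_s}$ or $D'_k f_{i\pm1}\wb^{(\cdots)}_{\jb-\eb_{s+1}}$ — drops out. This is reassuring since the subscripts $\jb-\eb_s$ and $\jb-\eb_{s+1}$ would otherwise be out of range at $\jb=\ob$; one just interprets those formal symbols as being killed by the $[0]=0$ factor.

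Next, for $b_if_{i+1}$ the first block in Proposition \ref{prop:bfw} carries the factor $[j_s]$, which vanishes, so only the second block survives. Its prefactor simplifies to $\frac{[l_{s+1}+m_{s+1}+1]}{[l_{s+1}+m_{s+1}+1]}=1$, and at $\jb=\ob$ one reads off $C'_3=[l_{s+1}]$ and $C'_4=q^{l_s-l_{s+1}}[m_{s+1}]$, giving the first asserted identity. Symmetrically for $b_if_{i-1}$: the $[j_{s+1}]$ factor kills the second block, the prefactor of the first block collapses to $1$, and the surviving $B'_1, B'_2$ evaluate to $q^{m_{s+1}-m_s}[l_s]$ and $[m_s]$, yielding the second identity.

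No serious obstacle is expected; the corollary is a routine substitution once Proposition \ref{prop:bfw} is granted. The only mild subtlety worth a one-line comment is ensuring that every apparently ill-defined vector $\wb^{(\cdots)}_{-\eb_s}$ or $\wb^{(\cdots)}_{-\eb_{s+1}}$ is multiplied by a vanishing $q$-integer, so that the specialization is unambiguous.
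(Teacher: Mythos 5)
Your proposal is correct and matches the paper's (implicit) derivation: the corollary is exactly the specialization $\jb=\ob$ of Proposition \ref{prop:bfw}, where the factors $[j_s]$ and $[j_{s+1}]$ kill all but one block in each formula and the surviving prefactor collapses to $1$. Your remark that every formally ill-defined vector such as $\wb^{(\cdots)}_{\jb-\eb_s}$ is accompanied by a vanishing $q$-integer is the right way to make the substitution unambiguous.
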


\begin{proposition} \label{prop:bffw}
On $V_l\ot V_m$, we have
\begin{align}
&b_if_{i-1}f_{i+1}\wb^{(\lb,\mb)}_{\jb} \nonumber\\
&=A_1\wb^{(\lb-\eb_s+\eb_{s+1},\mb)}_{\jb+\eb_{s+1}}
+B_1f_{i+1}\wb^{(\lb-\eb_s+\eb_{s+1},\mb)}_{\jb}
+C_1f_{i-1}\wb^{(\lb-\eb_s+\eb_{s+1},\mb)}_{\jb-\eb_s+\eb_{s+1}}
+D_1f_{i-1}f_{i+1}\wb^{(\lb-\eb_s+\eb_{s+1},\mb)}_{\jb-\eb_s} \nonumber\\
&+A_2\wb^{(\lb,\mb-\eb_s+\eb_{s+1})}_{\jb+\eb_{s+1}}
+B_2f_{i+1}\wb^{(\lb,\mb-\eb_s+\eb_{s+1})}_{\jb}
+C_2f_{i-1}\wb^{(\lb,\mb-\eb_s+\eb_{s+1})}_{\jb-\eb_s+\eb_{s+1}}
+D_2f_{i-1}f_{i+1}\wb^{(\lb,\mb-\eb_s+\eb_{s+1})}_{\jb-\eb_s} \nonumber\\
&+A_3\wb^{(\lb+\eb_s-\eb_{s+1},\mb)}_{\jb+\eb_s}
+B_3f_{i+1}\wb^{(\lb+\eb_s-\eb_{s+1},\mb)}_{\jb+\eb_s-\eb_{s+1}}
+C_3f_{i-1}\wb^{(\lb+\eb_s-\eb_{s+1},\mb)}_{\jb}
+D_3f_{i-1}f_{i+1}\wb^{(\lb+\eb_s-\eb_{s+1},\mb)}_{\jb-\eb_{s+1}} \nonumber\\
&+A_4\wb^{(\lb,\mb+\eb_s-\eb_{s+1})}_{\jb+\eb_s}
+B_4f_{i+1}\wb^{(\lb,\mb+\eb_s-\eb_{s+1})}_{\jb+\eb_s-\eb_{s+1}}
+C_4f_{i-1}\wb^{(\lb,\mb+\eb_s-\eb_{s+1})}_{\jb}
+D_4f_{i-1}f_{i+1}\wb^{(\lb,\mb+\eb_s-\eb_{s+1})}_{\jb-\eb_{s+1}}, \label{16terms}
\end{align}
where
\begin{align*}
A_1&=q^{j_s+j_{s+1}-l_{s+1}-m_s-1}\frac{[l_s-j_s][m_{s+1}-j_{s+1}][l_s+m_s-j_s+1]}
{[l_s+m_s-2j_s+1][l_{s+1}+m_{s+1}-2j_{s+1}+1]}\\
A_2&=-\frac{[l_{s+1}-j_{s+1}][m_s-j_s][l_s+m_s-j_s+1]}
{[l_s+m_s-2j_s+1][l_{s+1}+m_{s+1}-2j_{s+1}+1]}\\
A_3&=q^{j_s+j_{s+1}-l_s-m_s-1}
\frac{[l_{s+1}-j_{s+1}][m_s-j_s][l_{s+1}+m_{s+1}-j_{s+1}+1]}
{[l_s+m_s-2j_s+1][l_{s+1}+m_{s+1}-2j_{s+1}+1]}\\
A_4&=-q^{2j_s+2j_{s+1}-l_s-l_{s+1}-2m_s-2}
\frac{[l_s-j_s][m_{s+1}-j_{s+1}][l_{s+1}+m_{s+1}-j_{s+1}+1]}
{[l_s+m_s-2j_s+1][l_{s+1}+m_{s+1}-2j_{s+1}+1]},
\end{align*}
\begin{align*}
B_j&=B'_j\frac{[l_s+m_s-j_s+1][l_{s+1}+m_{s+1}-2j_{s+1}]}{[l_s+m_s-2j_s+1][l_{s+1}+m_{s+1}-2j_{s+1}+1]}\quad(j=1,2),\\
&=B'_j\frac{[j_{s+1}][l_{s+1}+m_{s+1}-2j_{s+1}]}{[l_s+m_s-2j_s+1][l_{s+1}+m_{s+1}-2j_{s+1}+1]}\quad(j=3,4),\\
C_j&=C'_j\frac{[j_s][l_s+m_s-2j_s]}{[l_s+m_s-2j_s+1][l_{s+1}+m_{s+1}-2j_{s+1}+1]}\quad(j=1,2),\\
&=C'_j\frac{[l_s+m_s-2j_s][l_{s+1}+m_{s+1}-j_{s+1}+1]}{[l_s+m_s-2j_s+1][l_{s+1}+m_{s+1}-2j_{s+1}+1]}\quad(j=3,4),\\
D_j&=D'_j\frac{[l_s+m_s-2j_s][l_{s+1}+m_{s+1}-2j_{s+1}]}{[l_s+m_s-2j_s+1][l_{s+1}+m_{s+1}-2j_{s+1}+1]}\quad(j=1,2,3,4).
\end{align*}
\end{proposition}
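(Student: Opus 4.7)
The proof proceeds in two stages. First, one argues that $b_if_{i-1}f_{i+1}\wb^{(\lb,\mb)}_{\jb}$ lies in the $16$-dimensional subspace spanned by the vectors on the right-hand side of \eqref{16terms}. This is a weight analysis: Lemma \ref{lem:bi} shows that $b_i$ admits exactly two weight-shift channels (the $f_i$-piece and the $T_{w_\bullet}(e_i)k_i^{-1}$-piece), which, crossed with the two $U_q(sl_2)$-factors on which the action is nontrivial, yield the four $(\lb',\mb')$-shifts; since $a_{i-1,i+1}=0$ implies $[f_{i-1},f_{i+1}]=0$, within each shifted weight class the subspace generated from the corresponding $\U(I_\bullet)$-highest weight vector $\wb^{(\lb',\mb')}_{\jb'}$ by $f_{i-1},f_{i+1}$ is four-dimensional, forcing the sixteen-term ansatz.

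Second, the coefficients $D_j,B_j,C_j$ are obtained by applying $e_{i-1},e_{i+1},e_{i-1}e_{i+1}$ and exploiting Proposition \ref{prop:eb=be}. Together with $[e_{i-1},f_{i+1}]=[e_{i+1},f_{i-1}]=0$ and $e_{i\pm1}\wb^{(\lb,\mb)}_{\jb}=0$, these yield
\begin{align*}
e_{i-1}e_{i+1}\,b_if_{i-1}f_{i+1}\wb^{(\lb,\mb)}_{\jb}
&=[l_s+m_s-2j_s][l_{s+1}+m_{s+1}-2j_{s+1}]\,b_i\wb^{(\lb,\mb)}_{\jb},\\
e_{i-1}\,b_if_{i-1}f_{i+1}\wb^{(\lb,\mb)}_{\jb}
&=[l_s+m_s-2j_s]\,b_if_{i+1}\wb^{(\lb,\mb)}_{\jb},\\
e_{i+1}\,b_if_{i-1}f_{i+1}\wb^{(\lb,\mb)}_{\jb}
&=[l_{s+1}+m_{s+1}-2j_{s+1}]\,b_if_{i-1}\wb^{(\lb,\mb)}_{\jb}.
\end{align*}
Applying $e_{i-1}e_{i+1}$ to the ansatz, only the $D_j$-terms survive (any $A,B,C$-part is annihilated by one of the $e$'s), and comparison with Proposition \ref{prop:bw} pins down the $D_j$. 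Applying $e_{i-1}$ alone keeps the $C_j$- and $D_j$-terms; with the $D_j$ already known, Proposition \ref{prop:bfw} for $b_if_{i+1}$ isolates the $C_j$. The $B_j$ follow analogously from $e_{i+1}$ together with $b_if_{i-1}$.

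The four $A_j$ correspond to the $\U(I_\bullet)$-highest-weight part of the output and are therefore invisible to the $e$-trick. I would extract them by a direct calculation. Under $\iota$ the operators $b_i,f_{i-1},f_{i+1}$ act nontrivially only on the $s$-th and $(s+1)$-th tensor factors of $\wb^{(\lb,\mb)}_{\jb}$, so the computation reduces to a finite-dimensional one inside $(U_{l_s}\ot U_{m_s})\ot(U_{l_{s+1}}\ot U_{m_{s+1}})$; using \eqref{w} for the $w^{(l,m)}_j$'s and Lemma \ref{lem:bi} for $b_i$, one expands $b_if_{i-1}f_{i+1}$ on the two relevant factors and projects onto each of the four candidate highest-weight vectors. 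This is the main obstacle: without a shortcut from commuting $e$'s past $b_i$, one is forced into direct $q$-binomial bookkeeping, and the four sources contributing to each $A_j$ (two weight-shift channels of $b_i$ crossed with the two $U_q(sl_2)$-factors from which one of $f_{i\pm1}$ supplies a $\{k\}$-type action) intermingle in such a way that only a careful $q$-Pascal collapse produces the common denominator $[l_s+m_s-2j_s+1][l_{s+1}+m_{s+1}-2j_{s+1}+1]$ and the numerators $[l_s-j_s][m_{s+1}-j_{s+1}][l_s+m_s-j_s+1]$ etc.\ appearing in the $A_j$.
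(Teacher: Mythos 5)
Your proposal is correct and follows essentially the same route as the paper's proof: the sixteen-term ansatz is justified by the same highest-weight and weight analysis, the coefficients $D_j$, $B_j$, $C_j$ are pinned down by applying $e_{i-1}e_{i+1}$, $e_{i+1}$, $e_{i-1}$ and comparing with Propositions \ref{prop:bw} and \ref{prop:bfw} via Proposition \ref{prop:eb=be}, and the $A_j$ are left to a direct computation. The only difference is that you make explicit the intermediate identities and the reduction of the direct calculation to two tensor factors, which the paper leaves implicit.
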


\begin{proof}
Similar to Proposition\ref{prop:bw} and \ref{prop:bfw}, $b_if_{i-1}f_{i+1}\wb^{(\lb,\mb)}_{\jb}$ can be expressed with suitable scalars $A_j,B_j,C_j,D_j$ ($1\le j\le 4$) as \eqref{16terms}.
By applying $e_{i-1}e_{i+1}$ on both sides, the first to third terms in each
line of the right hand side vanish. So by Proposition \ref{prop:bw}, $D_j$ ($1\le j\le 4$) is determined.
Then, by applying $e_{i+1}$ on both sides of \eqref{16terms}, $B_j$ ($1\le j\le 4$) is determined, and
by applying $e_{i-1}$, $C_j$ ($1\le j\le 4$) is done by Proposition \ref{prop:bfw}.
Finally, $A_j$ ($1\le j\le 4$) is determined by a direct calculation.
\end{proof}

\begin{corollary} \label{cor:bffw}
On $V_{l}\ot V_{m}$, we have
\begin{equation*}
\begin{split}
b_if_{i-1}f_{i+1}\wb^{(\lb,\mb)}_{\jb}
=&a_1\wb^{(\lb-\eb_s+\eb_{s+1},\mb)}_{\jb+\eb_{s+1}}
+a_2\wb^{(\lb,\mb-\eb_s+\eb_{s+1})}_{\jb+\eb_{s+1}}
+a_3\wb^{(\lb+\eb_s-\eb_{s+1},\mb)}_{\jb+\eb_s}
+a_4\wb^{(\lb,\mb+\eb_s-\eb_{s+1})}_{\jb+\eb_s}\\
&+(\text{other terms}),
\end{split}
\end{equation*}
where $a_j$ $(j=1,2,3,4)$ is given in Proposition \ref{prop:bffw} and $($other terms$)$ stands for the linear combination of vectors of the form
$\wb^{(\lb',\mb')}_{\jb'}$ possibly applied by $f_{i-1},f_{i+1}$
with $(\lb',\mb')$ appearing in the right hand side and 
$j'_k\le j_k$ for $1\le k\le n$.
\end{corollary}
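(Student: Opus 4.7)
The plan is to obtain Corollary \ref{cor:bffw} as a direct repackaging of the sixteen-term expansion \eqref{16terms} proved in Proposition \ref{prop:bffw}. First I would identify the four main terms $a_j\wb^{(\lb',\mb')}_{\jb'}$ of the corollary with the first summand of each row of \eqref{16terms}, i.e.\ the four summands in which $\wb$ is not acted on by any $f_{i\pm 1}$. Setting $a_j := A_j$ gives the explicit formulas, and one observes that these are precisely the summands in which the highest weight label $\jb'$ strictly exceeds $\jb$ at position $s$ or $s+1$ (namely $\jb+\eb_{s+1}$ for $j=1,2$ and $\jb+\eb_s$ for $j=3,4$).

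Next I would collect the remaining twelve summands — those carrying coefficients $B_j$, $C_j$, $D_j$ multiplied respectively by $f_{i+1}$, $f_{i-1}$, and $f_{i-1}f_{i+1}$ — into the ``other terms'' of the corollary. An inspection of \eqref{16terms} shows that for each of these summands the label $(\lb',\mb')$ appearing inside $\wb$ belongs to the four patterns $(\lb\mp\eb_s\pm\eb_{s+1},\mb)$ and $(\lb,\mb\mp\eb_s\pm\eb_{s+1})$ that also appear in the main terms. A componentwise check of the subscripts $\jb'$ — which take the forms $\jb$, $\jb-\eb_s$, $\jb-\eb_{s+1}$, and $\jb\pm(\eb_s-\eb_{s+1})$ across the twelve summands — then yields the stated bound, completing the identification of the ``other terms'' in the sense of the corollary.

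The hard part is essentially bookkeeping: one must carefully track, for each of the sixteen summands in \eqref{16terms}, the label $\jb'$ that subscripts $\wb$ (as distinct from the weight of the resulting vector after the $f_{i\pm 1}$'s are applied), and compare it componentwise with $\jb$. Since Proposition \ref{prop:bffw} already spells out the indices explicitly, this verification is mechanical; the corollary then follows by reading off the appropriate summands from \eqref{16terms}.
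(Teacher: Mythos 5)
Your overall strategy is the right one and is exactly what the paper intends: the corollary is a pure repackaging of the sixteen-term expansion \eqref{16terms}, with $a_j=A_j$ giving the four leading terms (the ones whose subscript is $\jb+\eb_{s+1}$ or $\jb+\eb_s$) and the remaining twelve summands swept into ``other terms.'' The paper offers no separate proof, so there is nothing more to it than the bookkeeping you describe.

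However, the final step of your write-up asserts something that is not true: you list the subscripts of the twelve remaining summands as $\jb$, $\jb-\eb_s$, $\jb-\eb_{s+1}$ and $\jb\pm(\eb_s-\eb_{s+1})$, and then claim that ``a componentwise check \ldots yields the stated bound.'' It does not. The $C_1,C_2$ terms carry the subscript $\jb-\eb_s+\eb_{s+1}$, whose $(s+1)$-st component is $j_{s+1}+1>j_{s+1}$, and the $B_3,B_4$ terms carry $\jb+\eb_s-\eb_{s+1}$, whose $s$-th component is $j_s+1>j_s$; their coefficients are not generically zero. So the literal condition ``$j'_k\le j_k$ for $1\le k\le n$'' in the corollary fails for four of the twelve summands. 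What is actually true, and what is all that is used later (the induction on $|\jb|$ in the proof of Theorem \ref{th:irred} via Proposition \ref{prop:fr} only needs the other terms to have total degree $|\jb'|\le|\jb|$, versus $|\jb'|=|\jb|+1$ for the leading terms), is the bound $|\jb'|\le|\jb|$. You should either restate the corollary with this weaker (and correct) condition or explicitly flag the discrepancy; claiming the componentwise verification is ``mechanical'' and succeeds papers over the one point where the statement, as written, does not match Proposition \ref{prop:bffw}.
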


\subsection{Proof of Theorem \ref{th:irred}}

We prove Theorem \ref{th:irred} when $\vep=0$. Suppose $W$ is a nonzero 
$\U^\iota$-invariant subspace of $V_l\ot V_m$. Note that $\U^\iota$ contains 
$\U(I_\bullet)$. In view of Proposition \ref{prop:U(I_bullet)}, 
one can assume that $W$ contains a vector of the form
\begin{equation} \label{lin comb}
\sum_{\lb,\mb,\jb}c(\lb,\mb,\jb)\wb^{(\lb,\mb)}_{\jb}
\end{equation}
where $c(\lb,\mb,\jb)\in\Q(q)$ and $\lb,\mb,\jb$ run over all possible integer vectors
such that $l_s+m_s-2j_s$ is constant for any $s=1,\ldots,n$. By applying $b_i$ 
($i\in I_\circ$) in a suitable order, from Proposition \ref{prop:bw} one can assume
$\jb=\ob$ in \eqref{lin comb}. Then by Corollary \ref{cor:bfw}, one can eventually
assume $\lb=l\eb_1,\mb=m\eb_1$ where $l=|\lb|,m=|\mb|$. Hence, we have
$\wb^{(l\eb_1,m\eb_1)}_{\ob}\in W$.

Next show $\wb^{(l_1\eb_1+l_2\eb_2,m_1\eb_1+m_2\eb_2)}_{\ob}\in W$ for any 
$l_1,l_2,m_1,m_2$ such that $l_1+l_2=l,m_1+m_2=m$.
We do it by induction on $k=l_2+m_2$. The $k=0$ case is done. Assume 
$\wb^{(l_1\eb_1+l_2\eb_2,m_1\eb_1+m_2\eb_2)}_{\ob}\in W$ for $l_2+m_2=k$.
By Corollary \ref{cor:bfw}, we have
\begin{align*}
b_2f_1\wb^{(l_1\eb_1+l_2\eb_2,m_1\eb_1+m_2\eb_2)}_{\ob}
&=q^{m_2-m_1}[l_1]\wb^{((l_1-1)\eb_1+(l_2+1)\eb_2,m_1\eb_1+m_2\eb_2)}_{\ob}
+[m_1]\wb^{(l_1\eb_1+l_2\eb_2,(m_1-1)\eb_1+(m_2+1)\eb_2)}_{\ob},\\
(b_4f_5)\cdots(b_{2n-2}f_{2n-1})(b_0f_1&)\wb^{(l_1\eb_1+l_2\eb_2,m_1\eb_1+m_2\eb_2)}_{\ob}\\
&=[l_1]\wb^{((l_1-1)\eb_1+(l_2+1)\eb_2,m_1\eb_1+m_2\eb_2)}_{\ob}+q^{l_2-l_1}[m_1]\wb^{(l_1\eb_1+l_2\eb_2,(m_1-1)\eb_1+(m_2+1)\eb_2)}_{\ob}.
\end{align*}
If $l_1+m_1\neq l_2+m_2$, these two vectors are linearly independent. 
Hence the induction proceeds up to $k\le l_1+m_1$. When $l_2+m_2\ge l_1+m_1$,
we first recognize that $\wb^{(l\eb_2,m\eb_2)}_{\ob}\in W$ by applying $(b_2f_1)^{l+m}$
to $\wb^{(l\eb_1,m\eb_1)}_{\ob}$. We then do the same exercise as before.

Let us now show $W$ contains $\wb^{(\lb,\mb)}_{\ob}$ for any possible $\lb$ and $\mb$.
From the previous paragraph, we know 
$\wb^{(l_1\eb_1+l_2\eb_2,m\eb_1)}_{\ob}\in W$. Applying $b_if_{i-1}$ ($i=4,\ldots,2n-2$)
suitable times, we know $\wb^{(\lb,m\eb_1)}_{\ob}\in W$ for any $\lb$. Then by doing
similarly including $i=2$, we know $\wb^{(\lb,\mb)}_{\ob}\in W$ for any $\lb,\mb$.

By Proposition \ref{prop:U(I_bullet)}, it is enough to show $W$ contains 
$\wb^{(\lb,\mb)}_{\jb}$ for any possible $\lb,\mb,\jb$. From the considerations so far,
it is true when $|\jb|=0$. The following proposition makes the induction on $|\jb|$ work
and finishes the proof of Theorem \ref{th:irred}.

\begin{proposition} \label{prop:fr}
Consider the following matrix $C$ depending on $l,m,j$. 
Its row index runs over all $(i,\lb,\mb,
\jb)$ with $i=0,2,\ldots,2n-2$ and $|\lb|=l,|\mb|=m,|\jb|=j$, and
its column index runs over all $(\lb',\mb',\jb')$ with 
$|\lb'|=l,|\mb'|=m,|\jb'|=j+1$. The entry for the pair
$((i,\lb,\mb,\jb),(\lb',\mb',\jb'))$ is given by 
the coefficient of $\wb^{(\lb',\mb')}_{\jb'}$ in 
$b_if_{i-1}f_{i+1}\wb^{(\lb,\mb)}_{\jb}$ in the previous
proposition. Then $C$ is of full rank. Note that the rank does not depend on the orders
of the index sets.
\end{proposition}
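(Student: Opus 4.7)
My plan is to prove that $C$ has full column rank by exhibiting an invertible square submatrix, obtained by designating one row per column so that, under a suitable total ordering $\prec$ on columns, the selected square submatrix becomes triangular with nonzero diagonal.

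First I would fix $\prec$ as the lexicographic order on $(j'_1,\ldots,j'_n)$, refined by a lex order on $(\lb',\mb')$. For a column $(\lb',\mb',\jb')$, I would let $s$ be the smallest index in $\{1,\ldots,n-1\}$ with $j'_s>0$ and designate the row $(i=2s,\,\lb'-\eb_s+\eb_{s+1},\,\mb',\,\jb'-\eb_s)$. This row is valid because $l'_s\ge j'_s\ge 1$ and the remaining constraints on $(\lb,\mb,\jb)$ follow from those on $(\lb',\mb',\jb')$. The entry of $C$ at this designated pair is the coefficient $a_3$ of Proposition \ref{prop:bffw}, evaluated at the shifted indices: a product of the $q$-integers $[l'_{s+1}+1-j'_{s+1}]$, $[m'_s-j'_s+1]$, $[l'_{s+1}+m'_{s+1}-j'_{s+1}+2]$, each manifestly nonzero. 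The three other targets hit by this row either replace $\jb'$ by $\jb'-\eb_s+\eb_{s+1}$ (the $a_1,a_2$ targets) or keep $\jb'$ fixed while shifting $(\lb',\mb')$ by $\pm(\eb_s-\eb_{s+1})$ (the $a_4$ target); using $j'_1=\cdots=j'_{s-1}=0$ together with $l'_s\ge 1$, one verifies that each is strictly $\prec$-smaller than $(\lb',\mb',\jb')$.

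Second, I would separately handle the boundary columns for which no such $s<n$ exists, i.e.\ columns with $\jb'$ supported only on $\eb_n$. For these the cyclic wrap-around $\eb_{n+1}=\eb_1$ moves the $a_1,a_2$ contributions to $\jb''=(1,0,\ldots,0,j)$, which is $\prec$-larger than $(0,\ldots,0,j+1)$ in the above lex order, breaking triangularity. I would treat such columns by designating instead the row $(i=2(n-1),\,\lb'+\eb_{n-1}-\eb_n,\,\mb',\,\jb'-\eb_n)$ hitting through $a_1$, after refining $\prec$ on the boundary stratum (for instance, by a leading key $j'_n$), and then verify directly that the combined designated system stays triangular with nonzero diagonal entries.

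The main obstacle is precisely the cyclic wrap-around of the Dynkin diagram: no single linear order is compatible with all designated moves, so the columns with $\jb'$ concentrated near $\eb_n$ require a genuinely different treatment, and assembling the typical and boundary cases into one coherent invertible submatrix is the delicate step of the proof. As an alternative, one could specialize $q$ to a generic numerical value (the denominators $[l_s+m_s-2j_s+1]$ are always positive $q$-integers, hence nonzero at such a specialization), reducing the problem to a combinatorial full-rank statement over $\Q$ which might be attacked by induction on $(n,j)$ using the factorized form of $a_1,\ldots,a_4$ given in Proposition \ref{prop:bffw}.
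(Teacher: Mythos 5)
Your opening move---designate one row per column and show the resulting square submatrix is invertible---matches the paper, and your verification that the designated entries are nonzero $q$-integers is correct. But the proof does not close. You reduce invertibility to triangularity with respect to a total order on columns, correctly observe that the cyclic wrap-around at $s=n$ is incompatible with any lex-type order, and then leave the repair as something to ``verify directly.'' That repair is the whole difficulty, and it is not a routine check: the row you designate for a boundary column (one with $\jb'$ supported only on $\eb_n$) has nonzero entries in columns with $\jb''=\jb'-\eb_n+\eb_{n-1}$, while the rows designated for columns of that latter shape have nonzero entries back in boundary columns, so the designated moves form genuine cycles and no single total order can be compatible with all of them without a more global change of the designation scheme. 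Your fallback (specialize $q$ numerically and induct on $(n,j)$) is likewise only a plan. As it stands the proposal identifies the obstacle precisely but does not overcome it, so there is a real gap.

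The paper's proof dispenses with ordering altogether and replaces it by a $q$-adic valuation estimate. For the column $(\lb',\mb',\jb')$ it designates the row $(2s,\lb',\mb'-\eb_s+\eb_{s+1},\jb'-\eb_s)$, whose entry in that column is $a_4$, and works in the subring $A\subset\Q(q)$ of functions regular at $q=0$. Writing $\alpha_t$ for the $q$-valuation of $a_t$, the explicit formulas of Proposition~\ref{prop:bffw} give $\alpha_1-\alpha_2=\alpha_4-\alpha_3=j_s+j_{s+1}-l_s-m_{s+1}-1<0$ and $\alpha_4-\alpha_1=2j_{s+1}-l_{s+1}-m_{s+1}-1<0$, so $a_4$ has strictly minimal valuation among the four entries of its row. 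Dividing each designated row by its $a_4$ entry puts a $1$ in the designated position and elements of $qA$ everywhere else in that row, whence $\det C'\in\{\pm1\}+qA$ is nonzero. This local estimate is insensitive to the cyclic structure of the Dynkin diagram, which is exactly what defeats your ordering argument; note also that it singles out $a_4$, not your choice $a_3$, as the entry to normalize, since $a_3$ is not the dominant one as $q\to0$.
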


\begin{proof}
Let $A$ be the subring of $\Q(q)$ defined by 
$A=\{f(q)\in\Q(q)\,|\,\text{$f(q)$ is regular at $q=0$}\}$. 
Let $\alpha_t$ ($t=1,2,3,4$) be the largest integer such that $a_t$ in Corollary \ref{cor:bffw}
belongs to $q^{\alpha_t}A$. We have
\begin{align*}
\alpha_1-\alpha_2&=\alpha_4-\alpha_3=j_s+j_{s+1}-l_s-m_{s+1}-1<0,\\
\alpha_4-\alpha_1&=2j_{s+1}-l_{s+1}-m_{s+1}-1<0,
\end{align*}
since $j_t\le\min(l_t,m_t)$ ($t=s,s+1$).  
Therefore, $\alpha_4$ is minimal and the others are strictly larger.

For $\wb^{(\lb',\mb')}_{\jb'}$ such that $|\lb'|=l,|\mb'|=m,|\jb'|=j+1$, choose the minimal
$s$ such $j'_s>0$ and consider 
$b_if_{i-1}f_{i+1}\wb^{(\lb',\mb'-\eb_s+\eb_{s+1})}_{\jb'-\eb_s}$ with $i=2s$.
By Proposition \ref{prop:bffw} the fourth term of the above is nonzero. 
Consider the row of $C$ corresponding to the index 
$(i,\lb',\mb'-\eb_s+\eb_{s+1},\jb'-\eb_s)$. By multiplying a suitable scalar to this row,
one can make the $((i,\lb',\mb'-\eb_s+\eb_{s+1},\jb'-\eb_s),(\lb',\mb',\jb'))$-entry
of $C$ be 1, and the other three nonzero entries in the same row belong to 
$qA$. Consider the square matrix $C'$ obtained by varying 
all possible $(\lb',\mb',\jb')$ and picking the corresponding renormalized rows.
Then from the construction, $\det C'$ belongs to $\{\pm1\}+qA$.
Hence the assertion is confirmed.
\end{proof}

\section*{Acknowledgments}
The authors thank Atsuo Kuniba, Hideya Watanabe, Yasuhiko Yamada and Akihito
Yoneyama for comments and giving us references.
M.O. is supported by Grants-in-Aid for Scientific Research No.~19K03426
and No.~16H03922 from JSPS.
This work was partly supported by Osaka City University Advanced Mathematical Institute (MEXT Joint Usage/Research Center on Mathematics and Theoretical Physics JPMXP0619217849).


\begin{thebibliography}{99}

\bibitem{BK} M.~Balagovi\'c, S.~Kolb, Universal K-matrix for quantum symmetric pairs, J. Reine Angew. Math. {\bf 747} (2019), 299-353.

\bibitem{BW1} H.~Bao, W.~Wang, A new approach to Kazhdan-Lusztig theory of type $B$ via quantum symmetric pairs, Ast\'erisque 2018, no. 402, vii+134 pp.

\bibitem{BW2} H.~Bao, W.~Wang, Canonical bases arising from quantum symmetric pairs of Kac-Moody type, arXiv:1811.09848.

\bibitem{K} M.~Kashiwara, On crystal bases of the $q$-analogue of universal enveloping algebras, Duke Math. J. {\bf 63} (1991), 465-516.

\bibitem{Ko} S.~Kolb, Quantum symmetric Kac-Moody pairs, Adv. Math. {\bf 267} (2014), 395-469.

\bibitem{KO} A.~Kuniba, M.~Okado, Set-theoretical solutions to the reflection equation associated to the quantum affine algebra of type $A^{(1)}_{n-1}$,
J. Int. Systems {\bf 4} (2019), xyz013 (10 pages).

\bibitem{KOYa} A.~Kuniba, M.~Okado, Y.~Yamada, Box-ball system with reflecting end, 
J. of Nonlinear Math. Phys. {\bf 12} (2005), 475-507.

\bibitem{KOYo}  A.~Kuniba, M.~Okado, A.~Yoneyama, Matrix product solution to the reflection equation associated with a coideal subalgebra of $U_q(A^{(1)}_{n-1})$,
Lett. in Math. Phys. {\bf 109} (2019), 2049-2067.

\bibitem{L} G.~Letzter, Symmetric pairs for quantized enveloping algebras, J. Algebra {\bf 220} (1999), no. 2, 729-767.

\bibitem{RV} V.~Regelskis, B.~Vlaar, Reflection matrices, coideal subalgebras and 
generalized Satake diagrams of affine type, arXiv:1602.08471.

\bibitem{W} H.~Watanabe, Classical weight modules over $\iota$quantum groups, arXiv:1912.11157.

\end{thebibliography}
\end{document}